\newcommand{\N}{\ensuremath{\mathbb{N}}}
\newcommand{\T}{\ensuremath{\mathbb{T}}}
\newcommand{\Z}{\ensuremath{\mathbb{Z}}}
\newcommand{\R}{\ensuremath{\mathbb{R}}}
\newcommand{\C}{\ensuremath{\mathbb{C}}}
\newcommand{\ii}{\mathit{i}}
\newcommand{\e}{\textnormal{e}}
\newcommand{\eip}[1]{\textnormal{e}^{2\pi\ii{#1}}}
\newcommand{\eim}[1]{\textnormal{e}^{-2\pi\ii{#1}}}
\newcommand{\norm}[2][]{\ifthenelse{\isempty{#1}}%
  {\left\Vert #2\right\Vert}%
  {\left\Vert #2\right\Vert_{#1}}}
\newcommand{\normf}[1]{\left\Vert #1\right\Vert_\textnormal{F}}
\newcommand{\normlonetm}[1]{\left\Vert #1 \right\Vert_{L^1}}
\newcommand{\normltm}[1]{\left\Vert #1 \right\Vert_{L^2}}
\newcommand{\normlinftm}[1]{\left\Vert #1 \right\Vert_{L^\infty}}
\newcommand{\setcond}[2]{\left\{#1 : #2\right\}}
\newcommand{\abs}[1]{\left| #1 \right |}
\newcommand{\dirich}[1]{\ifthenelse{\isempty{#1}}
	{d_n}
	{d_n\left(#1\right)} }		
\newcommand{\dirichd}[1]{\ifthenelse{\isempty{#1}}
	{d_n'}
	{d_n'\left(#1\right)} }	
\newcommand{\dirichdd}[1]{\ifthenelse{\isempty{#1}}
	{d_n''}
	{d_n''\left(#1\right)} }	
\newcommand{\dirichm}[1]{\ifthenelse{\isempty{#1}}
	{\tilde d_n}
	{\tilde d_n\left(#1\right)} }
\newcommand{\diff}{\mathrm{d}}
\newcommand{\MM}{\mathcal{M}}
\newcommand{\de}{\delta}
\newcommand{\la}{\lambda}
\renewcommand{\mathbf}[1]{\ensuremath{\boldsymbol{#1}}}
\newcommand{\rank}{\operatorname{rank}}
\newcommand{\supp}{\operatorname{supp}}
\newcommand{\diag}{\operatorname{diag}}
\newcommand{\adj}{^{*}}
\newcommand{\idmat}[1]{\mathrm{I}_{#1}}
\newcommand{\scalarprod}[2]{\left\langle #1,#2 \right\rangle}
\newcommand{\lr}[1]{\left(#1\right)}
\newcommand{\eg}{\textit{e.g.\ }}
\DeclareMathOperator*{\smax}{\sigma_{\textnormal{max}}}
\DeclareMathOperator*{\smin}{\sigma_{\textnormal{min}}}
\DeclareMathOperator*{\argmin}{argmin}
\DeclareMathOperator*{\Lip}{Lip}
\renewcommand{\d}{\,\mathrm{d}}
\newcommand{\TV}[1]{\|{#1}\|_{\textnormal{TV}}}
\newtheorem{thm}{Theorem}[section]
\newtheorem{lemma}[thm]{Lemma}
\newtheorem{corollary}[thm]{Corollary}
\newtheorem{proposition}[thm]{Proposition}
\theoremstyle{definition}
\newtheorem{remark}[thm]{Remark}
\newtheorem{definition}[thm]{Definition}
\newtheorem{example}[thm]{Example}
\numberwithin{equation}{section}
\numberwithin{table}{section}
\numberwithin{figure}{section}
\newcommand{\bend}{\hspace*{0ex} \hfill \hbox{\vrule height
    1.5ex\vbox{\hrule width 1.4ex \vskip 1.4ex\hrule  width 1.4ex}\vrule
    height 1.5ex}}
\long\def\symbolfootnote[#1]#2{\begingroup%
\def\thefootnote{\fnsymbol{footnote}}\footnote[#1]{#2}\endgroup}
\crefname{lemma}{Lemma}{Lemmata}
\crefname{definition}{Definition}{Definitions}
\crefname{theorem}{Theorem}{Theorems}
\crefname{thm}{Theorem}{Theorems}
\crefname{corollary}{Corollary}{Corollaries}
\crefname{equation}{}{}
\crefname{remark}{Remark}{Remarks}
\crefname{algorithm}{Algorithm}{Algorithms}
\crefname{chapter}{Chapter}{Chapters}
\crefname{section}{Section}{Sections}
\crefname{table}{Table}{Tables}
\crefname{figure}{Figure}{Figures}
\crefname{example}{Example}{Examples}
\crefname{appendix}{Appendix}{Appendices}
\renewcommand{\thefootnote}{\fnsymbol{footnote}}
\title{Approximation and interpolation of singular measures by trigonometric polynomials}
\date{}
\author{Paul Catala\footnotemark[1] \and Mathias Hockmann\footnotemark[1] \and Stefan Kunis\footnotemark[1] \and Markus Wageringel\footnotemark[1]}
\newif\ifshow
\begin{document}
\maketitle

 \begin{abstract}
 
 Complex signed measures of finite total variation are a powerful signal model in many applications.
 Restricting to the $d$-dimensional torus, finitely supported measures allow for exact recovery if the trigonometric moments up to some order are known.
 Here, we consider the approximation of general measures, e.g., supported on a curve, by trigonometric polynomials of fixed degree with respect to the Wasserstein-1 distance.
 We prove sharp lower bounds for their best approximation and (almost) matching upper bounds for effectively computable approximations when the trigonometric moments of the measure are known.
 A second class of sum of squares polynomials is shown to interpolate the characteristic function on the support of the measure and to converge to zero outside.

 \medskip
	
 	\noindent\textit{Key words and phrases}:
    frequency analysis,
 	super resolution.
 	\medskip
	
 	\noindent\textit{2010 AMS Mathematics Subject Classification} : \text{
 		65T40, 
 		42A15.  
 	}
 \end{abstract}

\footnotetext[1]{
	Osnabr\"uck University, Institute of Mathematics\\
	\texttt{\{paul.catala,mathias.hockmann,stefan.kunis,markus.wageringel\}@uos.de}
	}
	
\section*{Introduction}

Data science in general and more specifically signal and image processing relies on mathematical methods, with the fast Fourier transform as the most prominent example.
Besides its favourable computational complexity, its success relies on the good approximation of smooth functions by trigonometric polynomials.
%
%
Mainly driven by specific applications, functions with additional properties together with their computational schemes have gained some attention: signals might for instance be sparse like in single molecule fluorescence microscopy \cite{moerner03}, or live on some other lower dimensional structure 
like microfilaments, again in bio-imaging. Such properties are well modeled by measures, which can express the underlying structure through the geometry of their support, \eg being discrete or singular continuous. This representation has in particular led to a better understanding of the sparse super-resolution problem \cite{candes13,denoyelle17,ehler19}, but has also proven useful in many more applications, such as phase retrieval in X-ray crystallography, or contour reconstruction in natural images. In this work, we consider measures supported on the torus. The available data then consists in trigonometric moments of low to moderate order, and one asks for the reconstruction or approximation of the measures.

\paragraph{Related work} For discrete measures, there is a large variety of methods that compute or approximate the parameters of the measure, e.g., parametric methods like Prony's method \cite{prony1795,PlTa14,kunis16,josz19,sauer17}, matrix pencil \cite{HuSa90,Moitra_15,ehler19}, ESPRIT \cite{roy89,andersson18,sahnoun17,li20} or MUSIC \cite{schmidt86,liao14}, or variational methods, such as TV-minimization via the Beurling LASSO \cite{deCastro12,candes13}, which can be challenging for higher spatial dimensions \cite{castro17,poon18} or larger polynomial degrees. The positive-dimensional case on the other hand is more involved. Specific curves in a two-dimensional domain are identified by the kernel of moment matrices in \cite{pan14,vetterli2016,ongie16}, more general discussions can be found in \cite{laurentrostalski2012} and \cite{wageringel2022:truncated}.
In another line of work, Christoffel functions offer interesting guarantees both in terms of support identification \cite{lasserre19} or approximation \emph{on the support} \cite{kroo12,marx21,pauwels20}, but, to the best of our knowledge, require strong regularity assumptions, and only come with separate guarantees on and outside the support of the measure.

\paragraph{Contributions} Following the seminal paper \cite{Mh19}, we introduce easily computable trigonometric polynomials to approximate an arbitrary measure on the $d$-dimensional torus.
In contrast to \cite{Mh19}, we provide tight bounds on the pointwise approximation error as well as with respect to the Wasserstein-1 distance, the latter scaling inverse linearly with respect to the polynomial degree (up to a logarithmic factor).
After setting up the notations, \cref{sec:approx} considers the approximation of measures by trigonometric polynomials. \cref{Thm_existence} proves the existence of a best approximation and provides a lower bound which is attained for the univariate case and is sharp within a factor $6d$ for spatial dimensions $d>1$.
The convolution of the measure with the Fejér kernel has a representation via the moment matrix of the measure and is shown to be a sum of squares for non-negative measures in \cref{thm:Tn}.
\cref{thm:W1p} proves a sharp upper bound for its approximation error being a $\log$-factor worse than the best approximation. \cref{Thm_lower_bound} and \cref{Rem_Jackson} discuss the saturation of this approximation and the removal of the $\log$-factor by using the Jackson kernel.
In the univariate case, the Wasserstein-1 distance of measures is realized as $L^1$-norm after convolution with the Bernoulli spline of degree~$1$ and this also allows for the uniqueness of the best approximation for absolutely continuous real measures.

Section \ref{sec:interp} studies another sum-of-squares trigonometric polynomial defined via the moment matrix of the measure, similarly suggested in \cite[Thm.~3.5]{kunis16} and \cite[Prop.~5.3]{ongie16} (and indeed closely related to the \emph{rational} function \cite[Eq.~(6)]{schmidt86}).
This polynomial interpolates the constant one function on the Zariski closure of the support of the measure and converges pointwisely to zero outside. 
\cref{thm:p1characterization} proves a variational characterisation as well as the interpolation property.
The pointwise convergence is proved in \cref{Thm_pointwise_conv} and \cref{thm:pointw_pos} for the discrete and singular continuous case, respectively.
The discrete case also allows for a weak convergence result in \cref{thm:p1weak}.
We end by illustrating the theoretical results by numerical examples in \cref{sec:num}.

\section{Preliminaries}
Let $d\in\N$, $1\le p\le \infty$ and let $|x-y|_p = \min_{k\in\Z^d} \norm{x-y+k}_p$ denote the wrap-around $p$-norm on $\T^d=[0,1)^d$. For $d=1$ these wrap-around distances coincide and we denote them by $|x-y|_1$ to distinguish from the absolute value.
Throughout this paper, let $\mu,\nu$ denote some complex Borel measures on $\T^d$ with finite total variation and normalization $\mu(\T^d)=\nu(\T^d)=1$.
We denote the set of all such measures by $\MM$ and restrict to the real signed and non-negative case by $\MM_{\R}$ and $\MM_+$, respectively.

A function has Lipschitz-constant at most $1$ if $|f(x)-f(y)|\le |x-y|_1$ for all $x,y\in\T^d$ and we denote this by the shorthand $\Lip(f)\le 1$.
Using the dual characterisation by Kantorovich-Rubinstein, the Wasserstein-1-distance of $\mu$ and $\nu$ is defined by
\begin{align*}
 W_1(\nu,\mu)
 =\sup_{\Lip(f)\leq 1} \left|\int_{\T^d} f(x) \;\diff(\nu-\mu)\left(x\right)\right|,
\end{align*}
for any $\mu,\nu\in\mathcal{M}$, and $\mu,\nu\in\mathcal{M}_+$ also admit the primal formulation 
\begin{align*}
    W_1(\nu,\mu)=\inf_{\pi} \int_{\T^{2d}} |x-y|_1 \diff \pi(x,y)
\end{align*}
where the infimum is taken over all couplings $\pi$ with marginals $\mu$ and $\nu$, respectively. We note in passing that the Wasserstein-1-distances for other $p$ norms on $\T^d$ are equivalent with lower and upper constant $1$ and $d^{1-1/p}$, respectively.
Moreover, the Wasserstein distance defines a metric induced by the norm
\begin{align*}
    \|\mu\|_{\Lip^*}=\sup_{f: \Lip(f)\leq 1, \|f\|_{\infty}\leq \frac{d}{2}} \left|\int_{\T^d} f(x) d\mu(x)\right| 
\end{align*}
which makes the space of Borel measures with finite total variation a Banach space.
By slight abuse of notation, we also write $W_1(p,\mu)$ in case the measure $\nu$ has density $p$, i.e., $\diff\nu(x)=p(x)\diff x$.

The Fourier coefficients or trigonometric moments of $\mu$ are given by
\begin{align*}
 \hat\mu(k)=\int_{\T^d} \eim{kx}\diff\mu(x),\qquad k\in\Z^d,
\end{align*}
and these are finite with $|\hat\mu(k)|\le \TV{\mu}$ and $\hat\mu(0)=1$.
We are interested in the reconstruction of the measure given these moments for indices $k\in\{-n,\hdots,n\}^d$.
Besides collecting them in a vector, we also set up the finite moment matrix
\begin{align}\label{eq:moment-matrix}
  T_n=\left(\hat\mu(k-\ell)\right)_{k,\ell\in [n]},\qquad [n]=\{0,\hdots,n\}^d,
\end{align}
and denote its singular value decomposition by $T_n=U_n \Sigma_n V_n^*$, $\Sigma_n=\diag\big(\sigma_j^{(n)}\big)_j$. From the data stored in $T_n$, we compute trigonometric approximations $q_n$ to the underlying measure and distinguish between pointwise convergence to the characteristic function of $\supp \mu$, i.e.
\begin{align*}
    \lim_{n\to\infty} q_n(x) = \begin{cases} 1, \quad & x\in\supp\mu, \\ 0, \quad & \text{else,} \end{cases}
\end{align*}
and weak convergence, i.e.
\begin{align*}
    \lim_{n\to\infty} \int_{\T^d} f(x) q_n(x) \diff x = \int_{\T^d} f(x) \diff\mu(x)
\end{align*}
for all continuous test functions $f\colon \T^d\to \C $. The latter is denoted by $q_n\rightharpoonup \mu$ as $n\to\infty$ and the space of test functions can be restricted to Lipschitz continuous test functions by Portmanteau's theorem. Moreover, $q_n\rightharpoonup \mu$ is equivalent to $\lim_{n\to\infty} W_1(q_n,\mu)=0$  on the bounded set $\T^d$ and we can quantify rates of weak convergence in terms of the Wasserstein distance (e.g.\,cf.\,\cite[Thm.\,6.9]{Villani_09}).
\section{Approximation}\label{sec:approx}
We give two introductory examples, prove a lower bound on a best approximation, and an upper bound on the easily computable approximation by convolution with the Fejér kernel.

\begin{example}\label{ex:1}
 Our first example for $d=1$ is the measure
 \begin{align}\label{eq:ex1}
  \mu=\frac13\delta_{\frac18}+\nu\in\MM_+,\quad \frac{\diff\nu}{\diff\lambda}(x)=\frac89\chi_{\left[\frac14,\frac58\right]}(x)+\frac{\sqrt{2}}{3}\left( \frac{1}{\sqrt{\abs{x-\frac78}}} - \sqrt{8}\right)\chi_{\left[\frac34,1\right]}(x),
 \end{align}
 where $\lambda$ denotes the Lebesgue measure, which obviously has singular and absolutely continuous parts including an integrable pole at $x=\frac78$.
 Given the first trigonometric moments, the Fourier partial sums
 \begin{align*}
     S_n\mu(x)=\sum_{k\in\Z^d, |k|\le n} \hat \mu(k) \eip{kx}
 \end{align*}
 might serve as a sequence of approximations.
 Another classical sequence of approximations is given by convolution with the Fejér kernels, see \cref{eq:pn} below.
 Both approximations for $n=19$ are shown in the left and right panel of \cref{fig:ExampleMeasure}, respectively.
\begin{figure}[htb]
    \centering
    \includegraphics[width=0.4\textwidth]{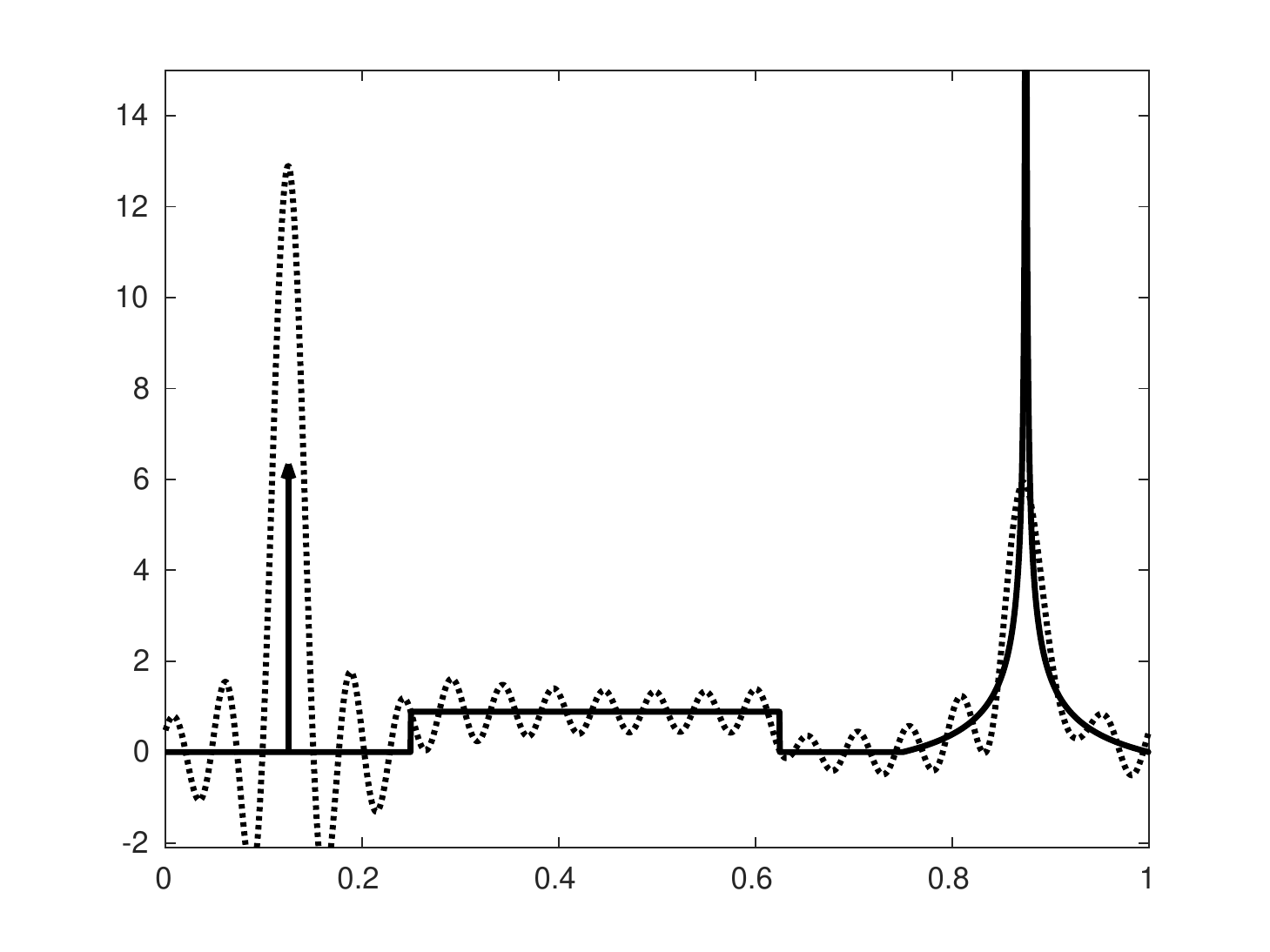}
    \includegraphics[width=0.4\textwidth]{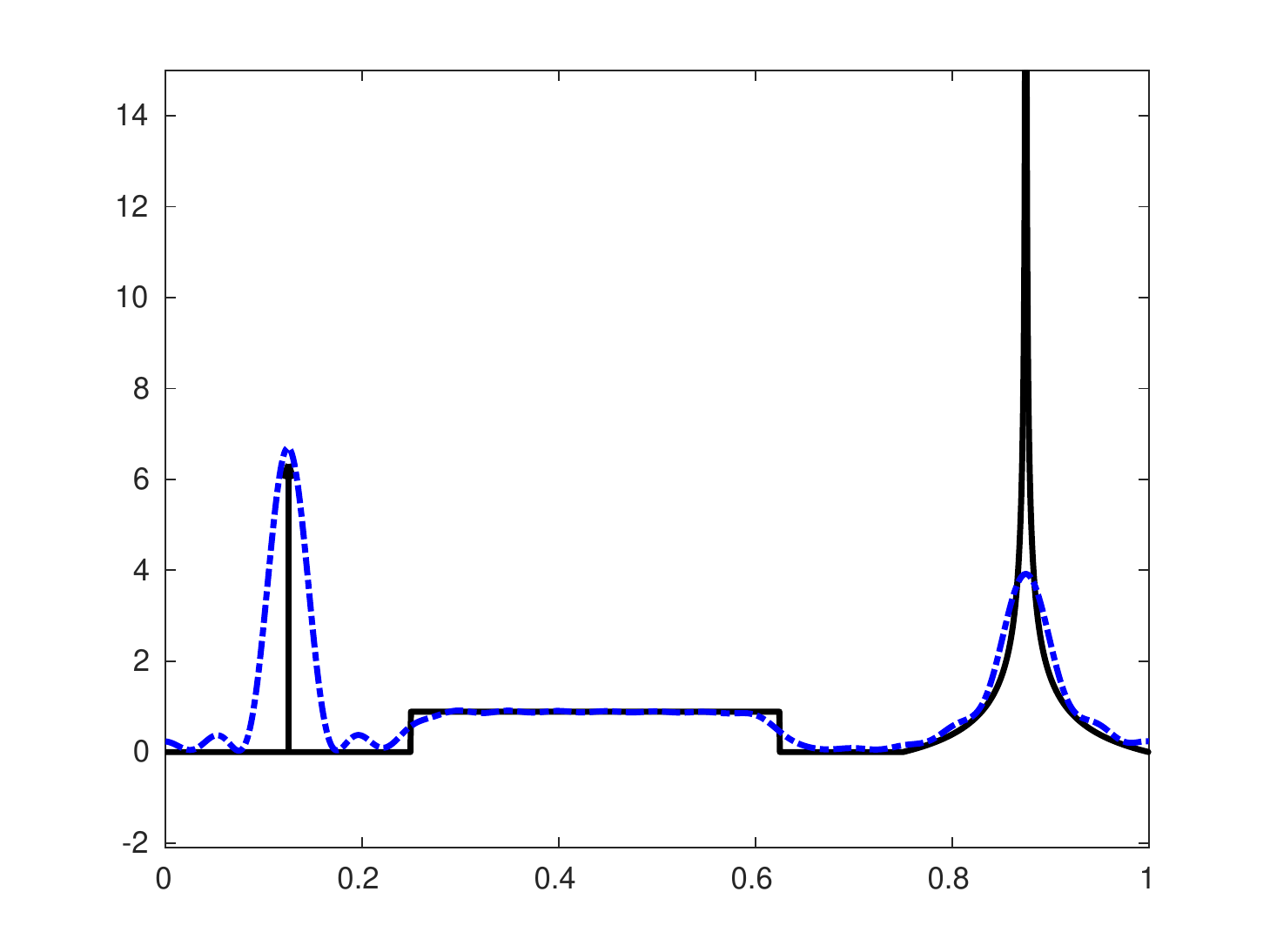}
    \caption{The example measure \cref{eq:ex1} and its approximations by the Fourier partial sum (left) and the convolution with the Fejér kernel (right). The weight $\frac13$ of the Dirac measure in $\frac18$ is displayed by an arrow of height $n/3$ for visibility.}
    \label{fig:ExampleMeasure}
\end{figure}

 Our second example is a singular continuous measure for $d=2$.
 We take $\mu=(2\pi r_0)^{-1}\delta_{C}\in\MM_+$ as the uniform measure on the circle 
\begin{align*}
C=\{x\in\T^2: |x|_2=r_0\}
\end{align*} 
for some radius $0<r_0<\frac{1}{2}$. The total variation of this measure is
\begin{align*}
    \TV{\mu} = \hat\mu(0)=\int_{\T^2} \diff \mu(x) = \frac{1}{2\pi r_0} \int_{C} \diff x = 1.
\end{align*}
Using the Poisson summation formula and a well-known representation of the Fourier transform of a radial function, we find
\begin{align}\label{eq:circlemoments}
\hat{\mu}(k)&=\int_{\T^2} \eim{kx} \diff\mu(x)= \frac{1}{r_0} \int_{0}^{\infty} r J_0(2\pi r \|k\|_2) \diff \delta_{r_0}(r)  = J_0(2\pi r_0 \|k\|_2)
\end{align}
for the trigonometric moments of $\mu$, where $J_0$ denotes the 0-th order Bessel function of the  first kind. These decay asymptotically with rate $\|k\|_2^{-1/2}$.
The Fourier partial sum as well as the convolution with the Fejér kernel for $n=29$ are shown with maximal contrast in the left and right panel of \cref{fig:ExampleMeasure2d}, respectively. 
\begin{figure}[htb]
    \centering
    \includegraphics[width=0.4\textwidth]{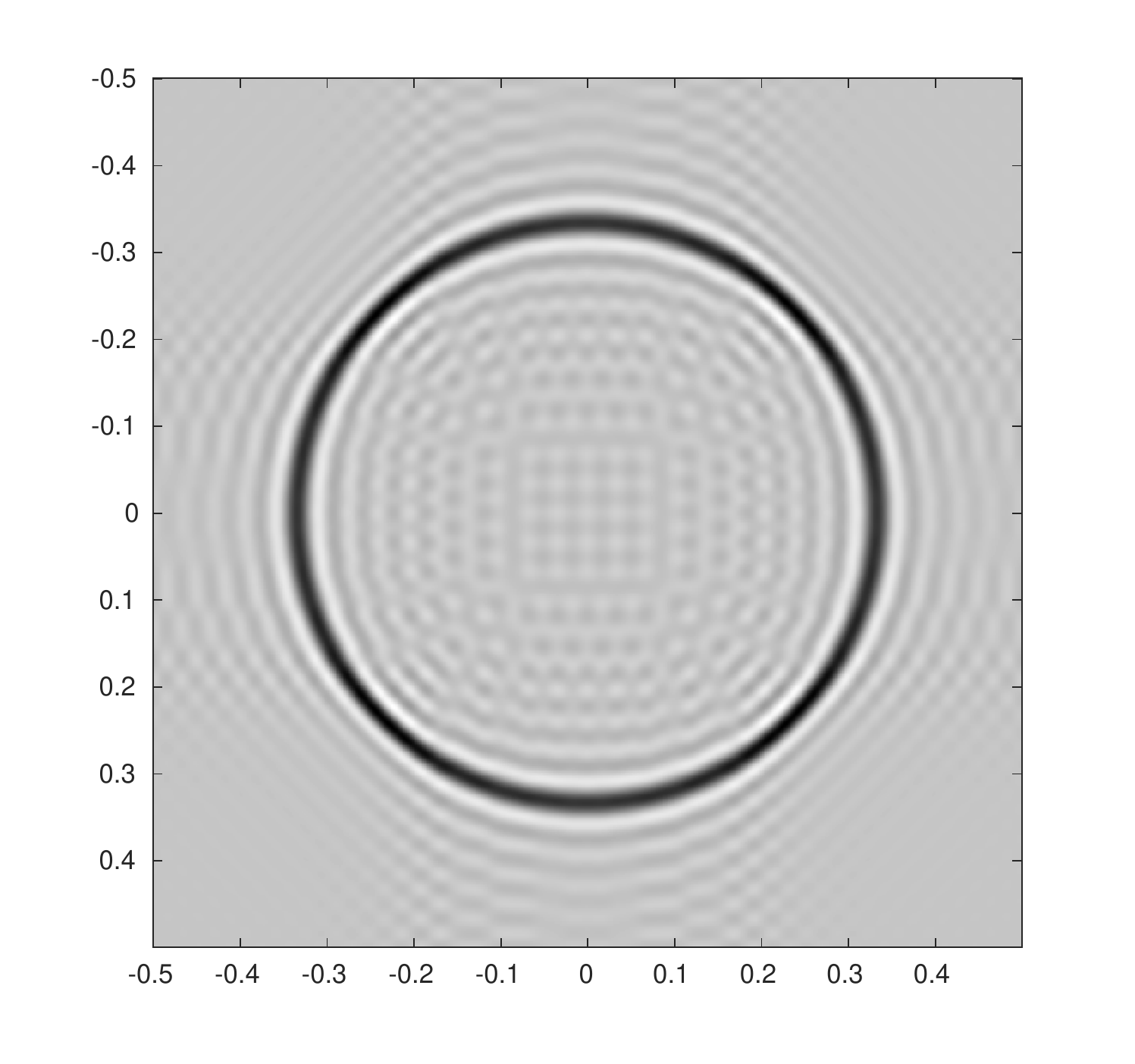}
    \includegraphics[width=0.4\textwidth]{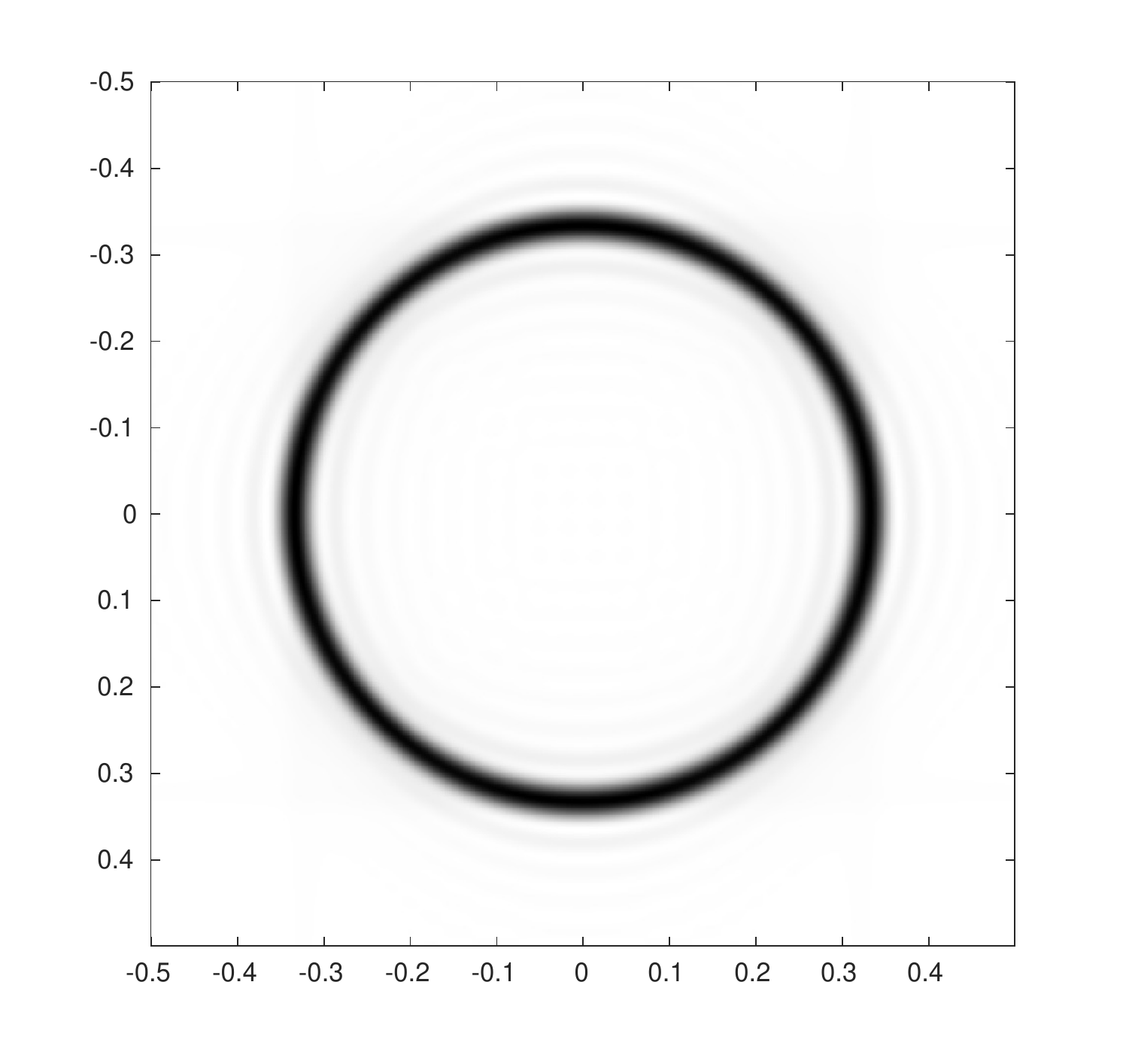}
    \caption{Uniform measure on a circle of radius $r_0=\frac13$ and its approximations by the Fourier partial sum (left) and the convolution with the Fejér kernel (right).}
    \label{fig:ExampleMeasure2d}
\end{figure}
\end{example}

\begin{thm} \label{Thm_existence}
For any $d,n\in\N$ and for every $\mu\in\MM$ there exists a polynomial of best approximation in the Wasserstein-1 distance. Moreover, we have
\begin{align*}
    \sup_{\mu\in\MM} \min_{\deg(p)\leq n} W_1(p,\mu) \geq \frac{1}{4(n+1)}.
\end{align*}
 
\end{thm}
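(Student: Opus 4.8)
The plan is to prove the two assertions separately: the existence of a minimiser by the direct method, and the lower bound by producing a single measure $\mu$ together with a single Lipschitz test function that witnesses it in the Kantorovich--Rubinstein dual.

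\emph{Existence.} Fix $\mu\in\MM$ and let $V$ be the set of trigonometric polynomials $p$ of degree at most $n$ with $\int_{\T^d}p\,\diff x=1$, so that $p\,\diff x\in\MM$; this is a closed affine subspace of the finite-dimensional space of trigonometric polynomials of degree $\le n$. On $V$ the functional $p\mapsto W_1(p,\mu)=\|p\,\diff x-\mu\|_{\Lip^*}$ is the composition of the norm $\|\cdot\|_{\Lip^*}$ with an affine map, hence convex and everywhere finite, and therefore continuous. It is also coercive: writing $p=1+q$ with $q$ ranging over the finite-dimensional space $V_0$ of degree-$\le n$ trigonometric polynomials of mean zero, one has $W_1(p,\mu)\ge\|q\,\diff x\|_{\Lip^*}-\|\diff x-\mu\|_{\Lip^*}$, and $q\mapsto\|q\,\diff x\|_{\Lip^*}$ is a genuine norm on $V_0$ (it vanishes only when $q\,\diff x=0$, i.e.\ $q\equiv0$), hence equivalent to any other norm on $V_0$ and in particular proper. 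A continuous, coercive function on a closed affine subspace of a finite-dimensional space attains its infimum, which yields the minimiser; the argument is unaffected by adding convex constraints such as $p\ge0$ (relevant when $\mu\in\MM_+$) or $p$ real.

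\emph{Lower bound.} It suffices to find, for each $d$ and $n$, one measure $\mu\in\MM$ with $\min_{\deg(p)\le n}W_1(p,\mu)\ge\frac1{4(n+1)}$, and I take $\mu=\delta_{\mathbf 0}\in\MM_+$. The key point is that the dual formula gives $W_1(p,\delta_{\mathbf 0})\ge\bigl|\int_{\T^d}f\,\diff(\delta_{\mathbf 0}-p\,\diff x)\bigr|=\bigl|f(\mathbf 0)-\int_{\T^d}f\,p\,\diff x\bigr|$ for \emph{every} $1$-Lipschitz $f$, so I want an $f$ orthogonal to all trigonometric polynomials of degree $\le n$, which makes the second term vanish for all admissible $p$. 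Set $N=n+1$ and let
\begin{align*}
f(\mathbf x)=\frac1{4N}-\dist\!\left(x_1,\tfrac1N\Z\right),\qquad \mathbf x=(x_1,\dots,x_d)\in\T^d,
\end{align*}
where $\dist(x_1,\tfrac1N\Z)$ is the wrap-around distance on $\T$ from $x_1$ to $\{0,\tfrac1N,\dots,\tfrac{N-1}N\}$ (a periodic tent of slope $\pm1$, normalised to mean zero). Then $\Lip(f)\le1$ for $|\cdot|_1$ on $\T^d$, since a distance function is $1$-Lipschitz and $f$ depends only on $x_1$; moreover $f(\mathbf 0)=\frac1{4N}$, and $f$ is $1/N$-periodic in $x_1$, so its Fourier coefficients satisfy $\hat f(\mathbf k)=\eip{k_1/N}\hat f(\mathbf k)$ and therefore vanish unless $\mathbf k=(k_1,0,\dots,0)$ with $N\mid k_1$, while $\hat f(\mathbf 0)=0$ by the mean-zero normalisation. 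Since $\{-n,\dots,n\}^d$ contains no such nonzero $\mathbf k$, expanding $p$ in a Fourier series gives $\int_{\T^d}f\,p\,\diff x=\sum_{\mathbf k\in\{-n,\dots,n\}^d}\hat p(\mathbf k)\,\overline{\hat f(\mathbf k)}=0$ for every trigonometric polynomial $p$ of degree $\le n$. Hence $W_1(p,\delta_{\mathbf 0})\ge\frac1{4(n+1)}$ for all such $p$, so $\min_{\deg(p)\le n}W_1(p,\delta_{\mathbf 0})\ge\frac1{4(n+1)}$, and taking the supremum over $\mu\in\MM$ (indeed already over $\mu\in\MM_+$) gives the claim.

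The only real content is the choice of $f$: annihilating the frequencies $1,\dots,n$ amounts exactly to requiring $1/(n+1)$-periodicity, and among $1$-Lipschitz, $1/(n+1)$-periodic, mean-zero functions the tent is extremal with peak $\frac1{4(n+1)}$ above its mean, which, read against a Dirac located at the peak, produces the constant. I expect no genuine obstacle beyond spotting this test function and checking that it lies in the admissible class; the matching upper bounds that show this constant is essentially optimal are the content of the later theorems and are not needed here.
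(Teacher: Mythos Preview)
Your proof is correct. The existence argument is a self-contained version of the finite-dimensional compactness the paper cites from \cite{DeVore93}, so there is no real difference there.

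For the lower bound, however, you take a genuinely different and more elementary route than the paper. The paper also starts from $\mu=\delta_0$, but then rewrites $\inf_p W_1(p,\delta_0)$ as $\inf_p\sup_f\|f-\check p*f\|_\infty$, swaps inf and sup to land in the classical problem of worst-case best uniform approximation of $1$-Lipschitz functions by trigonometric polynomials, and solves that via Banach-space duality, the Bernoulli spline $\mathcal B_1$, and a carefully chosen atomic measure $\mu_0^*$ on the $(2n+2)$-th roots of unity; the computation ends with $\int_\T|\mathcal B_1(s)-\tfrac12\mathcal B_1(2s)|\,\diff s=\tfrac14$. Your argument bypasses all of this by directly exhibiting a single test function---the $1/(n+1)$-periodic, mean-zero tent---that is $1$-Lipschitz, orthogonal to every frequency in $\{-n,\dots,n\}^d$, and peaks at $\tfrac1{4(n+1)}$ at the origin. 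What the paper's approach buys is the connection to (and a multivariate extension of) the Favard/Achieser--Krein theory, which also feeds into later remarks such as \cref{Ex:Unique_best} showing the constant is sharp. What your approach buys is brevity and self-containment: no duality, no Bernoulli splines, no external references. It is worth noting that your tent function is essentially $\frac{1}{n+1}\mathcal B_1((n+1)x_1)-\frac{1}{2(n+2)}\mathcal B_1((2n+2)x_1)$ evaluated differently, so the two arguments are cousins---yours simply skips the detour through best uniform approximation.
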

\begin{proof}
We directly have existence of a best approximation by polynomials in the Banach space of Borel measures with finite total variation (e.g.\,cf.\,\cite[Thm.\,3.1.1]{DeVore93}). For the lower bound, we compute
    \begin{align*}
        \sup_{\mu\in\MM} \inf_{\deg(p)\leq n} W_1(p,\mu) &\geq \inf_{\deg(p)\leq n} W_1(p,\delta_0) \\
        &= \inf_{\deg(p)\leq n} \sup_{f: \Lip(f)\leq 1} \left|f(0)-\int_{\T^d} f(x)p(x) \diff x\right| \\
        &=\inf_{\deg(p)\leq n} \sup_{\genfrac{}{}{0pt}{}{f: \|f\|_{\infty}\leq \frac{d}{2}}{\Lip(f)\leq 1}} \left\|f-\check{p}*f\right\|_{\infty} \\
        &\geq \sup_{\genfrac{}{}{0pt}{}{f: \|f\|_{\infty}\leq \frac{d}{2}}{\Lip(f)\leq 1}} \inf_{\deg(p)\leq n} \|f-p\|_{\infty},
    \end{align*}
    where $\check{p}$ denotes the reflection of $p$.
    It remains to find the worst case error for the best approximation of a Lipschitz function by a trigonometric polynomial. This is well-understood for $d=1$ (cf.\,\cite{AK1937,Favard1937}) while we did not find a reference that and how $d>1$ is possible as well. Therefore, we show that the idea by \cite{Fisher77} for the case $d=1$ works also for $d>1$ in our situation. A main ingredient of Fishers proof is the duality relation
    \begin{align*}
        \inf_{x\in Y\subset X} \|x_0-x\| = \sup_{\genfrac{}{}{0pt}{}{\ell\in X^*}{\ell|_Y=0, \|\ell\|_{X^*}\leq 1} } |\ell(x_0)|
    \end{align*}
    for a Banach space $X$, $x_0\in X$, with subset $Y$ and dual space $X^*$. The second ingredient is given by the 1-periodic Bernoulli spline of degree 1
    \begin{align} \label{eq_Bernoulli_spline}
        \mathcal{B}_1(x)=\sum_{k\in\Z\setminus\{0\}} \frac{\eip{kx}}{2\pi\ii k} = \sum_{k=1}^\infty \frac{\sin(2\pi k x)}{\pi k} = \frac12-x
    \end{align}
    for $0< x \leq 1$. A Lipschitz continuous and 1-periodic function $f\colon \T\to \R$ with $\Lip(f)\leq 1$ has a derivative $f'$ almost everywhere and this derivative satisfies $\int_{\T} f'(s)=0$ by the periodicity of $f$. Therefore, it follows that
    \begin{align*}
        \left( f'*\mathcal{B}_1\right)(t)&=\int_{\T} f'(s) \mathcal{B}_1(t-s) \diff s \\
        &= -\int_0^{t} (t-s) f'(s) \diff s -\int_{t}^1 (t-s+1) f'(s) \diff s \\
        &=f(t)-\int_0^1 f(s) \diff s
    \end{align*}
    for $0<t,s\leq 1$. The dual space of the space of continuous periodic functions is the space of periodic finite regular Borel measures equipped with the total variation norm and the duality formulation gives
    \begin{align*}
        \sup_{\genfrac{}{}{0pt}{}{f:\T^d\to \R}{\|f\|_{\infty}, \Lip(f)\leq 1}}\inf_{\deg(p)\leq n} \|f-p\|_{\infty} 
        &= \sup_{\genfrac{}{}{0pt}{}{f:\T^d\to \R}{\|f\|_{\infty}, \Lip(f)\leq 1}}\sup_{\genfrac{}{}{0pt}{}{\hat{\mu}(k) =0, \|k\|_{\infty}\leq n}{\TV{\mu}\leq 1} } \left|\int_{\T^d} f(x) \diff \mu(x)\right|.
    \end{align*}
    Our main contribution to this result is the observation how to transfer the multivariate setting back to the univariate one. It is easy to verify that $f(x)=\frac{1}{d}\sum_{\ell=1}^d f_0(x_\ell)$ 
    for a univariate Lipschitz function $f_0$, $\Lip(f_0)\leq d$, $\|f_0\|_{\infty}\leq \frac{d}{2}$ fulfils the conditions for the Lipschitz function $f$. Additionally, $\mu^*=\frac{1}{d}\sum _{s=1}^d \mu_s$ with $\mu_s=\left(\bigotimes_{\ell\neq s} \lambda(x_\ell)\right) \otimes \mu_0^*(x_s)$,
    \begin{align*}
        \mu_0^*(x_s)=\frac{1}{2(n+1)} \sum_{j=0}^{2n+1} (-1)^j \delta_{j/(2n+2)}(x_s)
    \end{align*}
    and $\lambda$ being the Lebesgue measure on $\T$ is admissible. Since this choice of $\mu_s$ integrates $\int g \diff\mu_s=0$ if $g$ is constant with respect to $x_s$ (and the same holds for constant univariate functions integrated against $\mu_0^*$), we obtain
    \begin{align*}
        \sup_{\genfrac{}{}{0pt}{}{f:\T^d\to \R}{\|f\|_{\infty}, \Lip(f)\leq 1}}\inf_{\deg(p)\leq n} \|f-p\|_{\infty} &\geq \sup_{\genfrac{}{}{0pt}{}{f_0:\T\to \R}{\|f_0\|_{\infty}\leq 1, \Lip(f_0)\leq d}} \left|\frac{1}{d^2} \sum_{s,\ell=1}^d \int_{\T^d} f_0(x_\ell) \diff \mu_s(x)\right| \\
        &=\sup_{\genfrac{}{}{0pt}{}{f_0:\T\to \R}{\|f_0\|_{\infty}\leq 1, \Lip(f_0)\leq d}} \left|\frac{1}{d^2} \sum_{\ell=1}^d \int_{\T} f_0(x_\ell) \diff \mu_0^*(x_\ell)\right| \\
        &=\sup_{\genfrac{}{}{0pt}{}{f_0:\T\to \R}{\|f_0\|_{\infty}\leq 1, \Lip(f_0)\leq d}} \frac{1}{d} \left| \int_{\T}  f_0'(s) \left(\int_{\T}\mathcal{B}_1(t-s) \diff \mu_0^*(t)\right) \diff s\right|.
    \end{align*}
   We denote $\mathcal{B}_{\mu*}(s)= \int_{\T}\mathcal{B}_1(t-s) \diff \mu_0^*(t)$ and observe $\int_{\T} \mathcal{B}_{\mu*}(s) \diff s=0$. Moreover, $\mu_0^*$ has moments $\hat{\mu}_0^*(k) = 1$ for $k\in (n+1)\left(2\Z+1\right)$ and $\hat{\mu}_0^*(k)=0$ otherwise. Together with the Fourier representation \eqref{eq_Bernoulli_spline} of $\mathcal{B}_1$ this gives
   \begin{align*}
       \mathcal{B}_{\mu*}(s) &= \sum_{k\in\Z\setminus\{0\}} \frac{\eip{k(n+1)x}}{2\pi\ii k(n+1)} - \sum_{k\in\Z\setminus\{0\}} \frac{\eip{2k(n+1)x}}{2\pi\ii 2k(n+1)} \\
       &=  \frac{1}{n+1} \mathcal{B}_1((n+1)s)- \frac{1}{2n+2} \mathcal{B}_1((2n+2)s).
   \end{align*}
   Hence, taking $f'_0(s)=\pm d$ depending on the sign of $\mathcal{B}_{\mu*}(s)$ is possible as $\|f_0\|_{\infty}\leq \frac{d}{2(n+1)}< \frac{d}{2}$ with this choice. Finally, we end up with
   \begin{align*}
        \sup_{\genfrac{}{}{0pt}{}{f:\T^d\to \R}{\|f\|_{\infty}, \Lip(f)\leq 1}}\inf_{\deg(p)\leq n} \|f-p\|_{\infty} & \geq   \int_{\T} \left|   \mathcal{B}_{\mu*}(s)\right|\diff s \\
        &= \frac{1}{n+1} \int_{\T} \left|\mathcal{B}_1((n+1)s)-\frac12\mathcal{B}_1((2n+2)s)\right| \diff s \\
        &= \frac{1}{n+1} \int_{\T} \left|\mathcal{B}_1(s)-\frac{1}{2}\mathcal{B}_1(2s)\right| \diff s \\
        &= \frac{1}{4(n+1)}
   \end{align*}
   and this was the claim.
\end{proof}

Let the Fejér kernel $F_n\colon\T\to\R$ and by slight abuse of notation also its multivariate version be given by
\begin{align*}
    F_n(x)=\sum_{k=-n}^n \left(1-\frac{|k|}{n+1}\right)\eip{kx}
    =\frac{1}{n+1}\left(\frac{\sin(n+1)\pi x}{\sin\pi x}\right)^2
\end{align*}
and $F_n(x_1,\hdots,x_d)=F_n(x_1)\cdot\hdots\cdot F_n(x_d)$, respectively.
The main object of study now is the approximation
\begin{align}\label{eq:pn}
    p_n(x) &=\left(F_n*\mu\right)(x) =\int_{\T^d} F_n(x-y) \diff\mu(y),
\end{align}
two examples are given in \cref{ex:1}.
We start by noting that the $p_n$ can be expressed in terms of the moment matrix and preserves non-negativity and normalization.
\begin{lemma}\label{thm:Tn}
 Let $d,n\in\N$, $\mu\in\MM$, and the moment matrix $T_n$ in \cref{eq:moment-matrix} be given, then the approximation \cref{eq:pn} fulfils
 \begin{align}
  p_n(x)
  &=\frac{e_n(x)^* T_n e_n(x)}{(n+1)^d},\qquad &e_n(x)&=\left(\eim{kx}\right)_{k\in[n]},\nonumber \\
  &=\frac{1}{(n+1)^d} \sum_{j=1}^N \sigma^{(n)}_j u^{(n)}_j(x)\overline{v^{(n)}_j(x)},\qquad &u^{(n)}_j(x)&=e_n(x)^* U_n,\;v^{(n)}_j(x)=V_n^* e_n(x) \nonumber
 \intertext{If $\mu\in\MM_{\R}$, then the moment matrix is Hermitian.
 If $\mu\in\MM_+$, then the moment matrix is positive semi-definite, $\|p_n\|_{L^1}=\TV{\mu}=1$, and}
  p_n(x)&=\frac{1}{(n+1)^d} \sum_{j=1}^N \sigma^{(n)}_j \left|u^{(n)}_j(x)\right|^2. \label{eq:pnuj}
 \end{align}
\end{lemma}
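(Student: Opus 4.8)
The plan splits into three steps: the quadratic-form representation (by expanding the Fejér kernel as a squared modulus), the singular value expansion (by substituting the singular value decomposition of $T_n$), and the Hermitian/positive semidefinite structure together with the $L^1$-identity (which follow from the hypotheses on $\mu$).

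For the first identity I would use the elementary counting fact $1-\tfrac{|k|}{n+1}=\tfrac{1}{n+1}\,\#\{(a,b)\in\{0,\dots,n\}^2:a-b=k\}$ for $|k|\le n$, both sides being zero for $|k|>n$. This gives $F_n(x)=\tfrac{1}{n+1}\bigl|\sum_{a=0}^n\eip{ax}\bigr|^2=\tfrac{1}{n+1}\sum_{a,b=0}^n\eip{(a-b)x}$ in one variable, and since the multivariate Fejér kernel is the tensor product of the univariate ones, $F_n(x)=(n+1)^{-d}\sum_{a,b\in[n]}\eip{(a-b)x}$, where $kx$ denotes the standard pairing of a multi-index with $x\in\T^d$. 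Convolving this finite sum term by term against $\mu$ multiplies the $(a,b)$-term by $\hat\mu(a-b)$, so $p_n(x)=(n+1)^{-d}\sum_{a,b\in[n]}\hat\mu(a-b)\,\eip{(a-b)x}$; comparing this with the expansion $e_n(x)^* T_n e_n(x)=\sum_{k,\ell\in[n]}\eip{kx}\,\hat\mu(k-\ell)\,\eim{\ell x}=\sum_{k,\ell\in[n]}\hat\mu(k-\ell)\,\eip{(k-\ell)x}$ yields the first displayed formula. For the second displayed formula I would substitute $T_n=U_n\Sigma_nV_n^*$, write $e_n(x)^* T_n e_n(x)=\bigl(e_n(x)^* U_n\bigr)\,\Sigma_n\,\bigl(V_n^* e_n(x)\bigr)$, and expand the diagonal matrix $\Sigma_n$ into the sum over the singular triples.

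For the structural part, the clean observation is that $T_n=\int_{\T^d}e_n(y)\,e_n(y)^*\diff\mu(y)$, which is immediate from the definitions. If $\mu\in\MM_{\R}$ this makes $T_n$ Hermitian (equivalently, $\hat\mu(-k)=\overline{\hat\mu(k)}$), and if $\mu\in\MM_+$ it gives $c^* T_n c=\int_{\T^d}\bigl|\sum_{k\in[n]}c_k\,\eip{ky}\bigr|^2\diff\mu(y)\ge 0$ for every $c\in\C^{[n]}$, so $T_n$ is positive semidefinite. Moreover $F_n\ge 0$ and $\mu\ge 0$ force $p_n=F_n*\mu\ge 0$, hence $\|p_n\|_{L^1}=\int_{\T^d}p_n=\widehat{p_n}(0)=\widehat{F_n}(0)\,\hat\mu(0)=1=\TV{\mu}$. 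Finally, a positive semidefinite $T_n$ has a singular value decomposition equal to its spectral decomposition, so one may take $U_n=V_n$ and $\Sigma_n\ge 0$; then $e_n(x)^* T_n e_n(x)=\bigl(e_n(x)^* U_n\bigr)\Sigma_n\bigl(e_n(x)^* U_n\bigr)^*=\sum_{j}\sigma^{(n)}_j\,|u^{(n)}_j(x)|^2$, which is \eqref{eq:pnuj}.

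The computations are all routine, and I do not expect a genuine obstacle. The only points deserving attention are keeping the conjugation and transposition bookkeeping consistent with the stated definitions of the singular functions $u^{(n)}$ and $v^{(n)}$, and observing that taking $U_n=V_n$ in the (non-unique) singular value decomposition of a positive semidefinite matrix is legitimate; the sign hypotheses $F_n,\mu\ge 0$ are used only to obtain $p_n\ge 0$ and $\TV{\mu}=\mu(\T^d)=1$ in the $L^1$-identity.
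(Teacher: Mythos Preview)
Your argument is correct and follows essentially the same route as the paper: both hinge on the identity $q^* T_n q=\int_{\T^d}\bigl|\sum_{k\in[n]}q_k\,\eip{ky}\bigr|^2\diff\mu(y)$ (equivalently $T_n=\int e_n(y)e_n(y)^*\diff\mu$), specialise it to $q=e_n(x)$ to recover $p_n$, and read off the Hermitian/positive semidefinite structure and the sum-of-squares form from the spectral decomposition. The only cosmetic difference is that you compute $\|p_n\|_{L^1}$ via $\widehat{p_n}(0)=\widehat{F_n}(0)\hat\mu(0)$, whereas the paper interchanges the order of integration directly; both are one-line arguments.
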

\begin{proof}
 Let $q\in\C^{(n+1)^d}$, then direct computation shows
 \begin{align*}
  q^* T_n q = \sum_{k,l\in [n]} \overline{q_k} \left(\int_{\T^d} \eim{(k-\ell)y}\diff\mu(y)\right) q_\ell
  =\int_{\T^d} \left|\sum_{k\in [n]} q_k \eip{ky}\right|^2 \diff\mu(y) \ge 0.
 \end{align*}
 Choosing $q=e_n(x)$ yields the second claim and by interchanging the order of integration and noting that the value of the inner integral is independent of $y$ also
 \begin{align*}
    \|p_n\|_{L^1}
    =\frac{1}{(n+1)^{d}} \int_{\T^d} 
    \int_{\T^d} \left|\sum_{k\in [n]} \eip{k(y-x)}\right|^2  \diff x \diff\mu(y)
    =\mu(\T^d)=\TV{\mu}.
\end{align*}
\end{proof}

Our next goal is a quantitative approximation result, for which we need the following preparatory lemma.
This result can be found in qualitative form e.g.~in \cite[Lemma 1.6.4]{BuNe71}.

\begin{lemma}\label{le:Fn}
Let $n,d\in\N$, then we have
\begin{align*}
    \frac{d}{\pi^2} \left(\frac{\log(n+2)}{n+1}+\frac{1}{n+3}\right)
    \leq
    \int_{\T^d} F_n(x) |x|_1 \diff x
    \leq \frac{d}{\pi^2}\frac{\log(n+1) + 3}{n}.
\end{align*}
\end{lemma}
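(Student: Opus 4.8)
The plan is to reduce immediately to the univariate case $d=1$ and then estimate $\int_{\T} F_n(x)\,|x|_1\,\diff x$ from above and below. Because the multivariate Fejér kernel is a tensor product and $\int_\T F_n = 1$, one has by Fubini
\[
 \int_{\T^d} F_n(x)\,|x|_1\,\diff x
 \le \sum_{\ell=1}^d \int_{\T^d} F_n(x)\,|x_\ell|_1\,\diff x
 = d\int_{\T} F_n(t)\,|t|_1\,\diff t,
\]
using $|x|_1 = \sum_\ell |x_\ell|_1$ and the normalization of $F_n$ in the other coordinates; the same identity with $|x|_1 \ge |x_1|_1$ (pick one coordinate) gives the matching lower bound $\int_{\T^d}F_n(x)|x|_1\diff x \ge \int_\T F_n(t)|t|_1\diff t$, which is actually weaker than what is claimed, so for the lower bound I will in fact need $|x|_1 \ge \max_\ell |x_\ell|_1$... but since the stated lower bound carries a factor $d$, I should instead just use the crude bound coordinate-wise again; more carefully, $\int F_n(x)|x|_1 \diff x = d \int_\T F_n(t)|t|_1\diff t$ holds \emph{exactly} when $|x|_1$ is the $1$-norm, since integrating $|x_\ell|_1$ against $F_n(x)$ factorizes. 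So both bounds follow from the univariate estimate
\[
 \frac{1}{\pi^2}\Bigl(\frac{\log(n+2)}{n+1}+\frac{1}{n+3}\Bigr)
 \;\le\; \int_{\T} F_n(t)\,|t|_1\,\diff t
 \;\le\; \frac{1}{\pi^2}\cdot\frac{\log(n+1)+3}{n}.
\]

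For the univariate estimate I would write $F_n(t) = \frac{1}{n+1}\bigl(\frac{\sin((n+1)\pi t)}{\sin(\pi t)}\bigr)^2$ and use $|t|_1 = \min(t, 1-t)$ on $[0,1)$, so by symmetry $\int_\T F_n(t)|t|_1\diff t = 2\int_0^{1/2} F_n(t)\,t\,\diff t = \frac{2}{n+1}\int_0^{1/2} t\,\frac{\sin^2((n+1)\pi t)}{\sin^2(\pi t)}\,\diff t$. The key inequalities are the elementary bounds $\tfrac{2}{\pi}\pi t \le \sin(\pi t) \le \pi t$ on $[0,\tfrac12]$, i.e. $2t \le \sin(\pi t)\le \pi t$. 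The lower bound on $\sin(\pi t)$ (namely $\ge 2t$) gives $\frac{\sin^2((n+1)\pi t)}{\sin^2(\pi t)} \le \frac{\sin^2((n+1)\pi t)}{4t^2}$, hence an upper estimate $\frac{1}{2(n+1)}\int_0^{1/2} \frac{\sin^2((n+1)\pi t)}{t}\,\diff t$; substituting $u=(n+1)t$ turns this into $\frac{1}{2(n+1)}\int_0^{(n+1)/2}\frac{\sin^2(\pi u)}{u}\,\diff u$, and the standard estimate $\int_0^{M}\frac{\sin^2(\pi u)}{u}\diff u \le \tfrac12\log M + C$ (split at $u=1$: on $(0,1)$ use $\sin^2(\pi u)/u \le \pi^2 u$, on $(1,M)$ use $\sin^2 \le 1$ plus the average $\tfrac12$ refinement) yields the $\frac{\log(n+1)+3}{n}$-type bound after absorbing constants into $\pi^2$. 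For the lower bound I use $\sin(\pi t)\le \pi t$, restrict the integral to where $\sin^2((n+1)\pi t)$ is not too small (e.g. the union of intervals around the peaks, or just keep the full integrand and lower-bound via $\int_0^{1/2}\frac{\sin^2((n+1)\pi t)}{\pi^2 t}\,t\,\diff t$ type manipulations), and match against $\frac{\log(n+2)}{n+1}+\frac{1}{n+3}$.

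The main obstacle is getting the \emph{sharp} constant $1/\pi^2$ and the precise additive terms ($\log(n+2)$ versus $\log(n+1)$, the $\frac{1}{n+3}$ tail) rather than merely the correct order $\frac{\log n}{n}$; this requires careful splitting of $\int_0^{1/2}$ into a neighborhood of the origin (where $\sin(\pi t)\sim \pi t$ so the constant $\pi^2$ is exact in the limit) and the remainder, and tracking the Dirichlet-type integral $\int \frac{\sin^2}{u}$ with its $\tfrac12\log$ asymptotics tightly enough. The qualitative version being available from \cite[Lemma 1.6.4]{BuNe71}, the only real work is this quantitative bookkeeping; I would organize it as two one-line reductions (multivariate $\to$ univariate, symmetry $\to$ $[0,\tfrac12]$) followed by the substitution $u=(n+1)t$ and two careful estimates of $\int_0^{(n+1)/2}\frac{\sin^2(\pi u)}{u}\,\diff u$, one from each side.
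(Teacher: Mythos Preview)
Your reduction to the univariate case via the identity $\int_{\T^d}F_n(x)\,|x|_1\,\diff x = d\int_\T F_n(t)\,|t|_1\,\diff t$ is exactly what the paper does (and it is an exact equality from the start, so your initial hedging with inequalities is unnecessary).

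For the univariate estimate your route differs from the paper's, and the obstacle you yourself flag is real and unresolved. Using $\sin(\pi t)\ge 2t$ on all of $[0,\tfrac12]$ yields an upper bound with leading term $\frac{\log n}{4n}$, not $\frac{\log n}{\pi^2 n}$; since $1/4>1/\pi^2$ you cannot ``absorb constants into $\pi^2$'' as you write. Your splitting-near-the-origin idea can in principle recover the sharp constant, because the $\log n$ contribution genuinely comes from the region where $\sin(\pi t)\sim\pi t$, but you have not carried this out, and matching the specific additive $+3$ and the denominator $n$ (rather than $n+1$) would require careful bookkeeping. The lower bound is in better shape, since $\sin(\pi t)\le\pi t$ already produces the correct $1/\pi^2$.

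The paper avoids all of this by a different and cleaner route: it expands $F_n(x)=1+2\sum_{k=1}^n\bigl(1-\tfrac{k}{n+1}\bigr)\cos(2\pi kx)$, integrates $\int_0^{1/2}x\cos(2\pi kx)\,\diff x$ in closed form, and then uses the identity $\sum_{j\ge0}(2j+1)^{-2}=\pi^2/8$ to arrive at the exact expression
\[
\int_\T F_n(x)\,|x|_1\,\diff x \;=\; \frac{2}{\pi^2}\sum_{j>\lfloor n/2\rfloor}\frac{1}{(2j+1)^2} \;+\; \frac{2}{(n+1)\pi^2}\sum_{j=0}^{\lfloor n/2\rfloor}\frac{1}{2j+1}.
\]
Here the constant $1/\pi^2$ is automatic, and both the upper and lower bounds follow from one-line integral comparisons for the two sums---no splitting and no asymptotics of $\int_0^M\frac{\sin^2(\pi u)}{u}\,\diff u$ are needed.
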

\begin{proof}
 First note that
 \begin{align*}
  \int_{\T^d} \prod_{s=1}^d F_n(x_s)  \sum_{\ell=1}^d |x_\ell|_1 \diff x
    = \sum_{\ell=1}^d \int_{\T^d} \prod_{s=1}^d F_n(x_s) |x_\ell|_1 \diff x
    = d \int_{\T} F_n(x) |x|_1 \diff x
 \end{align*}
 such that it is sufficient to consider the univariate case.
 With the representation $F_n(x)=1+2\sum_{k=1}^n \left(1-\frac{k}{n+1}\right) \cos(2\pi k x)$ we find after elementary integration
\begin{align*}
   \int_{\T} F_n(x) |x|_1 \diff x &= 2 \int_0^{1/2} x+2\sum_{k=1}^n \left(1-\frac{k}{n+1}\right) \cos(2\pi k x)x \d x \\
    &=2 \left[\frac{1}{8}-\frac{1}{(n+1)\pi^2}\sum_{j=0}^{\lfloor \frac{n}{2} \rfloor} \frac{n-2j}{(2j+1)^2} \right] \\
    &=2\left[\frac{1}{\pi^2} \sum_{j=\lfloor \frac{n}{2} \rfloor+1}^{\infty} \frac{1}{(2j+1)^2} + \frac{1}{(n+1)\pi^2} \sum_{j=0}^{\lfloor \frac{n}{2} \rfloor} \frac{1}{2j+1} \right] \\
    &\leq \frac{2}{\pi^2} \int_{\lfloor \frac{n}{2} \rfloor}^\infty \frac{1}{(2y+1)^2} \diff y + \frac{2}{(n+1)\pi^2} \left(1+ \int_{0}^{\lfloor \frac{n}{2} \rfloor} \frac{1}{2y+1} \diff y\right) \\
    &\leq\frac{1}{\pi^2n}+ \frac{1}{(n+1)\pi^2} \left(2+ \log(n+1)\right) \\
    &\leq \frac{3+\log(n+1)}{\pi^2 n},
\end{align*}
since we have that $\sum_{j=0}^{\infty} \frac{1}{(2j+1)^2}=\frac{\pi^2}{8}$. The lower bound follows similarly by bounding the series from the previous calculation by integrals from below.
\end{proof}

\begin{thm}\label{thm:W1p}
Let $d,n\in\N$ and $\mu\in\MM$, then the measure with density $p_n$ converges weakly to $\mu$ with
\begin{align*}
    W_1(p_n,\mu)\le \frac{d}{\pi^2}\frac{\log(n+1) + 3}{n} \cdot \TV{\mu} ,
\intertext{which is sharp since $\mu\in\MM_+$ implies $\TV{\mu}=1$ and}
\sup_{\mu\in\MM} W_1(p_n,\mu) \geq \frac{d}{\pi^2} \left(\frac{\log(n+2)}{n+1}+\frac{1}{n+3}\right).
\end{align*}
\end{thm}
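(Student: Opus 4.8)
The plan is to obtain the upper bound from the Kantorovich--Rubinstein dual description of $W_1$ combined with the elementary convolution estimate for Lipschitz functions, and to obtain the lower bound by evaluating the supremum over $\MM$ at the Dirac measure $\delta_0$. Throughout, recall that $F_n$ is even, non-negative, lies in $L^1(\T^d)$ with $\int_{\T^d}F_n=1$, so that $\int_{\T^d}p_n=\mu(\T^d)=1$ and the density $p_n$ indeed defines a measure in $\MM$.

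\emph{Upper bound.} Fix a test function $f\colon\T^d\to\C$ with $\Lip(f)\le 1$. Since $\TV{\mu}<\infty$ and $F_n\in L^1$, Fubini's theorem together with the evenness of $F_n$ gives
\[
  \int_{\T^d}f(x)p_n(x)\,\diff x-\int_{\T^d}f\,\diff\mu
  =\int_{\T^d}\big((F_n*f)(y)-f(y)\big)\,\diff\mu(y).
\]
Because $F_n\ge 0$ integrates to $1$, the wrap-around triangle inequality and $\Lip(f)\le 1$ give, uniformly in $y$,
\[
  \big|(F_n*f)(y)-f(y)\big|
  \le\int_{\T^d}F_n(z)\,\big|f(y-z)-f(y)\big|\,\diff z
  \le\int_{\T^d}F_n(z)\,|z|_1\,\diff z .
\]
Hence the left-hand side of the first display is bounded in modulus by $\TV{\mu}\int_{\T^d}F_n(z)|z|_1\,\diff z$; taking the supremum over all $1$-Lipschitz $f$ and inserting the upper bound of \cref{le:Fn} proves the claimed inequality. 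Since the right-hand side tends to $0$ and, on the bounded set $\T^d$, convergence in $W_1$ is equivalent to weak convergence (as recalled in the preliminaries), we conclude $p_n\rightharpoonup\mu$.

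\emph{Lower bound.} It suffices to take $\mu=\delta_0\in\MM_+\subseteq\MM$, for which $p_n=F_n*\delta_0=F_n$. The function $f(x)=-|x|_1$ has $\Lip(f)\le 1$ (by the wrap-around triangle inequality) and $f(0)=0$, so the dual formula yields
\[
  \sup_{\mu\in\MM}W_1(p_n,\mu)\ge W_1(F_n,\delta_0)
  \ge\Big|\int_{\T^d}\big(-|x|_1\big)F_n(x)\,\diff x-f(0)\Big|
  =\int_{\T^d}F_n(x)\,|x|_1\,\diff x,
\]
and the lower bound of \cref{le:Fn} finishes the proof. Equivalently, since $F_n\,\diff x$ and $\delta_0$ are both non-negative, the only coupling is $F_n(x)\,\diff x\otimes\delta_0(\diff y)$ and the primal formulation gives equality $W_1(F_n,\delta_0)=\int_{\T^d}F_n(x)|x|_1\,\diff x$. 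The normalization $\TV{\mu}=1$ on $\MM_+$ records that the two displayed bounds coincide up to the asymptotically negligible gap between $\frac{\log(n+1)+3}{n}$ and $\frac{\log(n+2)}{n+1}+\frac{1}{n+3}$.

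\emph{Main difficulty.} There is essentially no obstacle here: all quantitative content sits in \cref{le:Fn}. The only points that need care are the Fubini interchange for a general complex measure of finite total variation, the observation that $F_n\ge 0$ and $\int F_n=1$ keep the pointwise convolution estimate valid even though $p_n$ itself need not be non-negative, and the admissibility check for the test function $x\mapsto-|x|_1$.
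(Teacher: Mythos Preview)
Your proof is correct and follows essentially the same route as the paper: both use the dual formulation together with the pointwise estimate $|F_n*f-f|\le\int_{\T^d}F_n(z)|z|_1\,\diff z$ for the upper bound, and both realise the lower bound by the choice $\mu=\delta_0$, $f(x)=\pm|x|_1$, reducing everything to \cref{le:Fn}. Your additional remark that the unique coupling gives $W_1(F_n,\delta_0)=\int_{\T^d}F_n(x)|x|_1\,\diff x$ exactly matches the paper's ``in passing'' observation.
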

\begin{proof}
We compute
\begin{align*}
    W_1(p_n,\mu)
    &=\sup_{\Lip(f)\leq 1} \left|\langle F_n*\mu,f\rangle-\langle \mu,f\rangle\right|\\
    &=\sup_{\Lip(f)\leq 1} \left|\langle \mu,F_n*f-f\rangle\right|\\
    &\leq \sup_{\Lip(f)\leq 1} \int_{\T^d} \int_{\T^d} F_n(x)  \left|f(y-x)-f(y)\right| \diff x \diff |\mu|(y) \\
    &\leq \TV{\mu} \int_{\T^d} F_n(x)  |x|_1 \diff x,
\end{align*}
note that both inequalities become equalities when choosing $\mu=\delta_0$ and $f(x)=|x|_1$, and then apply \cref{le:Fn}.
We note in passing that $W_1(F_n,\delta_0)=
    \int_{\T^d} F_n(x) |x|_1 \diff x$.
\end{proof}

\begin{example}
 \cref{thm:W1p} gives a worst case lower bound and, on the other hand, the Lebesgue measure is approximated by $F_n*\lambda=\lambda$ without any error.
 We may thus ask how well a measure $\diff\mu=w(x) \diff x$ with smooth (non-negative) density might be approximated. If we choose the analytical density $w(x)=1+\cos(2\pi x)$, then $F_n*w(x)-w(x)=\cos(2\pi x)/(n+1)$ and, by testing with the Lipschitz function $f(x)=\cos(2\pi x)/(2\pi)$, we see that
 \begin{align*}
  W_1(F_n*w,w)
  &\ge \frac{1}{2\pi(n+1)}\int_{\T}\cos^2(2\pi x)\diff x=\frac{1}{4\pi(n+1)}.
 \end{align*}
 This effect is called saturation (e.g.\,cf.\,\cite{BuNe71}).
\end{example}

In greater generality, such a lower bound holds for each measure individually and can be inferred by a nice relationship between the Wasserstein distance and a discrepancy, cf.~\cite{EhGrNeSt21}.
\begin{thm} \label{Thm_lower_bound}
For each individual measure $\mu\in\MM$ different from the Lebesgue measure, there is a constant $c>0$ such that 
\begin{align*}
   W_1(p_n,\mu)\geq  \frac{c}{n+1}
\end{align*}
holds for all $n\in\N$.
\end{thm}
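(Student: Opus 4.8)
The plan is to exploit the identity $p_n = F_n * \mu$ together with the fact that convolution acts diagonally on Fourier coefficients, so that $\widehat{p_n}(k) = \widehat{F_n}(k)\,\widehat\mu(k)$. Since $\mu$ is not the Lebesgue measure, there is some frequency $k_0 \neq 0$ with $c_0 \eqdef |\widehat\mu(k_0)| > 0$. The Fejér multiplier at that frequency is $\widehat{F_n}(k_0) = \prod_{s} \left(1 - \tfrac{|(k_0)_s|}{n+1}\right)$ for $\|k_0\|_\infty \le n$ and $0$ otherwise; in either case $|\widehat{p_n}(k_0) - \widehat\mu(k_0)| = c_0\,\bigl(1 - \widehat{F_n}(k_0)\bigr) \ge \tfrac{c_0 \|k_0\|_\infty}{n+1}$ for all $n$ large enough (and trivially $=c_0$ once $n < \|k_0\|_\infty$). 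So the single Fourier mode $k_0$ already carries an error of order $1/(n+1)$, uniformly in $n$.

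Next I would convert this Fourier-coefficient gap into a Wasserstein lower bound by testing against a fixed Lipschitz function. Write $k_0 = (k_0)_1,\dots$ and pick the test function $f_\theta(x) = \frac{1}{2\pi\|k_0\|_2}\,\re\bigl(\e^{\ii\theta}\eip{k_0 x}\bigr)$, which satisfies $\Lip(f_\theta)\le 1$ (its gradient has Euclidean norm at most $1$, and since the Wasserstein distance for the $1$-norm dominates the one for the $2$-norm this suffices after adjusting the constant). Then
\begin{align*}
 \left|\int_{\T^d} f_\theta \,\diff(p_n-\mu)\right|
 = \frac{1}{2\pi\|k_0\|_2}\,\bigl|\re\bigl(\e^{\ii\theta}(\widehat{p_n}(-k_0)-\widehat\mu(-k_0))\bigr)\bigr|,
\end{align*}
and choosing $\theta$ to align the phase makes the right-hand side equal to $\frac{1}{2\pi\|k_0\|_2}\,|\widehat{p_n}(k_0)-\widehat\mu(k_0)|$. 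Combining with the bound from the previous paragraph and taking $c = \frac{c_0\,\|k_0\|_\infty}{2\pi\|k_0\|_2}$ (shrinking $c$ if necessary to absorb the finitely many small $n$, where the bound is even easier since $1-\widehat{F_n}(k_0)$ is bounded below by a positive constant) yields $W_1(p_n,\mu) \ge \frac{c}{n+1}$ for all $n\in\N$, as claimed. The cleaner packaging alluded to in the text via the discrepancy of \cite{EhGrNeSt21} is essentially this computation phrased through the equivalence of $W_1$ with a Fourier-weighted discrepancy.

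The main obstacle is not conceptual but bookkeeping: one must make sure the constant is genuinely uniform over all $n$, including small $n$ and the degenerate regime $n < \|k_0\|_\infty$ where $\widehat{F_n}(k_0)=0$, and one must handle the multivariate Lipschitz normalization carefully (the factor $1/\|k_0\|_2$ versus using $\|k_0\|_\infty$, and the constant relating the $p=1$ and $p=2$ Wasserstein distances). A secondary subtlety is that $\mu$ is complex, so one should pick $k_0$ among the nonzero frequencies realizing $\sup_{k\neq 0}|\widehat\mu(k)|$ — or any fixed one with $\widehat\mu(k_0)\neq 0$ — and then the phase-alignment step makes the real-part test tight. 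None of these require new ideas; the result is really just "one bad Fourier mode survives Fejér smoothing with a deficit $\Theta(1/n)$."
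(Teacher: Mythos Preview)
Your argument is correct and considerably more direct than the paper's. The paper routes the estimate through a reproducing kernel Hilbert space: it picks a kernel $K=h*h$ with $\hat h(k)\ne 0$, bounds the associated discrepancy $\mathcal D(p_n,\mu)$ from below by $\tfrac{1}{d(n+1)}\|h*(\mu-\lambda)\|_{L^2}$ via the multiplier identity $\widehat{p_n}(k)-\widehat\mu(k)=(\widehat{F_n}(k)-1)\widehat\mu(k)$, and then shows $W_1\ge C\,\mathcal D$ by a Lipschitz bound on the unit ball of the RKHS. Your single-frequency test collapses all of this to one line: the same multiplier identity at a fixed $k_0$ with $\widehat\mu(k_0)\ne 0$, together with the elementary inequality $1-\prod_s(1-|(k_0)_s|/(n+1))\ge\|k_0\|_\infty/(n+1)$, already gives the $c/(n+1)$ lower bound after testing against a normalised plane wave. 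What the paper's approach buys is a constant expressed through a full $L^2$-norm $\|h*(\mu-\lambda)\|_{L^2}$ (aggregating all modes) and an explicit connection to the discrepancy framework of \cite{EhGrNeSt21}; what yours buys is brevity and transparency, since the theorem only asserts existence of \emph{some} $c>0$. One small caveat: for genuinely complex $\mu$ your displayed identity $\int f_\theta\,\mathrm d(p_n-\mu)=\tfrac{1}{2\pi\|k_0\|_2}\re\bigl(e^{i\theta}(\widehat{p_n}(-k_0)-\widehat\mu(-k_0))\bigr)$ is not quite right, because $\widehat\nu(-k_0)\ne\overline{\widehat\nu(k_0)}$ in general; the integral is $\tfrac{1}{4\pi\|k_0\|_2}\bigl[e^{i\theta}\widehat\nu(-k_0)+e^{-i\theta}\widehat\nu(k_0)\bigr]$, whose maximum over $\theta$ is $\tfrac{1}{4\pi\|k_0\|_2}(|\widehat\nu(-k_0)|+|\widehat\nu(k_0)|)\ge\tfrac{c_0(1-\widehat{F_n}(k_0))}{4\pi\|k_0\|_2}$, so the conclusion survives with an extra factor $2$ in the constant (or simply test with the complex Lipschitz function $x\mapsto \tfrac{1}{2\pi\|k_0\|_2}e^{-2\pi i k_0 x}$ directly, which the paper's definition of $W_1$ permits).
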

\begin{proof}
Let $\hat{h}\in\ell^2(\Z^d)$, $\hat{h}(k)\in\R\setminus\{0\}$, $\hat{h}(k)=\hat{h}(-k)$,
and consider the reproducing kernel Hilbert space
\begin{align*}
    H=\{f\in L^2(\T^d): \sum_{k\in\Z^d} |\hat{h}(k)|^{-2} |\hat{f}(k)|^2 <\infty\},\qquad
    \|f\|_{H}^2=\sum_{k\in\Z^d} |\hat{h}(k)|^{-2} |\hat{f}(k)|^2.
\end{align*}
Given two measures $\mu,\nu$, their discrepancy (which also depends on the space $H$) is defined by
\begin{align*}
    \mathcal{D}(\mu,\nu)=\sup_{\|f\|_{H}\leq 1} \left|\int_{\T^d} f~\diff (\mu- \nu)\right|
\end{align*}
and fulfils by the geometric-arithmetic inequality
\begin{align*}
    \mathcal{D}(p_n,\mu)^2
    &=\sum_{\|k\|_{\infty}\leq n} |\hat{h}(k)|^{2} \left|1-\prod_{\ell=1}^d \left(1-\frac{|k_\ell|}{n+1}\right)\right|^2 |\hat{\mu}(k)|^2 + \sum_{\|k\|_{\infty}> n} |\hat{h}(k)|^{2}|\hat{\mu}(k)|^2 \\
    &\geq \sum_{\|k\|_{\infty}\leq n} |\hat{h}(k)|^{2} \left|\frac{\|k\|_1}{d(n+1)}\right|^2 |\hat{\mu}(k)|^2 + \sum_{\|k\|_{\infty}> n} |\hat{h}(k)|^{2}|\hat{\mu}(k)|^2 \\
    &=\sum_{\|k\|_{\infty}\leq n} |\hat{h}(k)|^{2} \left|\frac{\|k\|_1}{d(n+1)}\right|^2 |\hat{\mu}(k)-\hat{\lambda}(k)|^2 + \sum_{\|k\|_{\infty}> n} |\hat{h}(k)|^{2}|\hat{\mu}_k-\hat{\lambda}(k)|^2 \\
    &\geq \frac{1}{d^2(n+1)^2} \|h*(\mu-\lambda)\|_{L^2(\T^d)}^2
\end{align*}
where $h(x)=\sum_{k\in\Z^d} \hat{h}(k) \eip{kx}$ and $\lambda$ denotes the Lebesgue measure with $\hat \lambda(0)=1$ and $\hat \lambda(k)=0$ for $k\in\Z^d\setminus\{0\}$.

Our second ingredient is a Lipschitz estimate: If $f\in H$ with $\|f\|_{H}\leq 1$, then 
\begin{align*}
    |f(y)-f(y+x)|^2
    &=\left|\sum_{k\in\Z^d} \hat{f}(k) \left(\eip{ky}-\eip{k(y+x)}\right)\right|^2\\
    &\leq \|f\|_{H}^2 \sum_{k\in\Z^d} \left|\eip{ky}-\eip{k(y+x)}\right|^2 |\hat{h}(k)|^2\\
    &\leq 2\left(K(0)-K(x)\right),
\end{align*}
where $K(x)=\sum_{k\in\Z^d} |\hat{h}(k)|^2 \eip{kx}=(h*h)(x)$ denotes the so-called reproducing kernel of the space $H$.
If this kernel is $K(x_1,\hdots,x_d)=h^{[4]}(x_1)\cdot\hdots\cdot h^{[4]}(x_d)$ for some univariate function $h^{[4]}\in C^2(\T)$, $\left(h^{[4]}\right)'(0)=0$, we find by a telescoping sum and direct calculation
\begin{align*}
    K(0)-K(x)&= \prod_{\ell=1}^dh^{[4]}(0)-\prod_{\ell=1}^d h^{[4]}(x_\ell) \\
    &\leq \sum_{\ell=1}^d \left(h^{[4]}(0)^\ell\prod_{k=1}^{d-\ell} h^{[4]}(x_k)-h^{[4]}(0)^{\ell-1}\prod_{k=1}^{d-\ell+1} h^{[4]}(x_k)\right)\\
    &\leq \sum_{\ell=1}^d \|h^{[4]}\|^{d-1}_{\infty} \left[h^{[4]}(0)-h^{[4]}(x_\ell)\right] \\
    &\leq \frac12 \|h^{[4]}\|^{d-1}_{\infty} \left\|\left(h^{[4]}\right)''\right\|_{\infty} |x|^2.
\end{align*}

To make a specific choice, let $a\in(0,\frac{1}{8})$ be some irrational number and set $h^{[2]}= \chi_{[-a,a]}*\chi_{[-a,a]}$ as the convolution of the indicator function on $[-a,a]$ with itself,
$h^{[4]}=h^{[2]}*h^{[2]}$, and $h(x_1,\hdots,x_d)=h^{[2]}(x_1)\cdot\hdots\cdot h^{[2]}(x_d)$.
Since the space of Lipschitz test functions is at least as large as the reproducing kernel Hilbert space, we derive
\begin{align*}
    W_1(p_n,\mu) \geq \frac{1}{2}\cdot \left(\frac{\sqrt{3}}{4}\right)^{d-1} a^{1-\frac{3}{2}d} \mathcal{D}(p_n,\mu)
    \geq \frac{1\cdot \left(\frac{\sqrt{3}}{4}\right)^{d-1} a^{1-\frac{3}{2}d}}{d(n+1)} \|h*(\mu-\lambda)\|_{L^2(\T^d)}.
\end{align*}
Since $a$ is irrational, we can directly see by Parseval's theorem that $\|h*(\mu-\lambda)\|_{L^2(\T^d)}=0$ if and only if $\mu=\lambda$. For $\mu\neq\lambda$, we obtain the statement with a positive constant $c$ depending on the measure $\mu$, the constant $a$, and the spatial dimension $d$.
\end{proof}

\begin{remark} \label{Rem_Jackson}
 The gap between upper and lower bounds can be narrowed by choosing another convolution kernel, which then however does not allow for the representation in \cref{thm:Tn}.
 The Jackson kernel
 \begin{align*}
     J_{2m-2}(x)=\frac{3}{m(2m^2+1)} \frac{\sin^4(m\pi x)}{\sin^4(\pi x)}, \quad m\in\N,
 \end{align*}
 has degree $n=2m-2$ and satisfies
 \begin{align*}
     \int_{\T} J_n(x) |x|_1 \diff x \leq \frac{6}{m(2m^2+1)} \left[\int_0^{1/2m} m^4 x \diff x+ \int_{1/2m}^{\infty} \frac{1}{16x^3} \diff x\right] \leq \frac{3m}{2(2m^2+1)} \le \frac{3}{2(n+2)}.
 \end{align*}
 Analogously to \cref{thm:W1p}, we get
 \begin{align}
    \label{eq:W1-jackson}
     W_1(J_n*\mu,\mu)\leq \frac{3}{2} \frac{d \cdot \TV{\mu}}{n+2},
 \end{align}
 which still is an approximate factor 6 worse than the lower bound in the univariate case. A factor 3 is due to the above estimate and a factor 2 seems to indicate that the Jackson kernel is not optimal.
 Moreover, upper and lower bound differ by a factor $d$ in the multivariate case which might be due to the used norms or our proof techniques.
 
 We mention at this point that 
 \begin{align*}
     W_1(K_n*\mu,\mu)\leq \TV{\mu} W_1(\delta_0,K_n),&&\text{and}&&
     W_1(\delta_0,K_n)\le \int_{\T^d} |K_n(x)| |x|_1 \diff x
 \end{align*}
 for any kernel $K_n$ and measure $\mu\in\MM$ with equality in the second inequality if $K_n$ is nonnegative.
\end{remark}

\subsection{Univariate case}

In one variable, the question of uniqueness of the best approximation can be equivalently characterised by the uniqueness of the best approximation in $L^1(\T)$ and thus allows for the following lemma.

\begin{lemma}[Best approximation in the univariate case] For $d=1$, any absolutely continuous real measure admits a unique best approximation by a polynomial of degree $n\in\N$ with respect to the Wasserstein-1 distance.\label{Lem_W11D}
\end{lemma}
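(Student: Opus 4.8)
The plan is to reduce the Wasserstein-1 approximation problem in one dimension to a classical $L^1$ best-approximation problem, where uniqueness theory is available. Recall from the proof of \cref{Thm_existence} that for $d=1$ the Bernoulli spline $\mathcal{B}_1$ satisfies $f'*\mathcal{B}_1 = f-\int_\T f$ for Lipschitz $1$-periodic $f$. The key observation is that this identity can be reversed to express the Wasserstein distance itself: for real measures $\mu,\nu$ with $\mu(\T)=\nu(\T)=1$, the function $g=\mathcal{B}_1*(\mu-\nu)$ is well-defined (since $\widehat{\mu-\nu}(0)=0$, the zero mode of $\mathcal{B}_1$ being irrelevant), lies in $L^1(\T)$, and one has
\begin{align*}
    W_1(\nu,\mu) = \sup_{\Lip(f)\le 1}\left|\int_\T f\,\diff(\nu-\mu)\right| = \sup_{\Lip(f)\le 1}\left|\int_\T f'(s)\,g(s)\,\diff s\right| = \|g\|_{L^1(\T)},
\end{align*}
where the last equality uses that $\{f' : \Lip(f)\le1, f\ \text{$1$-periodic}\}$ is exactly the set of $L^\infty$ functions of norm $\le 1$ with mean zero, and that $g$ itself has mean zero so subtracting a constant from $\sgn(g)$ does not change the pairing. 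Thus $W_1(\nu,\mu) = \|\mathcal{B}_1*(\mu-\nu)\|_{L^1(\T)}$.

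Next I would apply this with $\nu$ having density $p$, $\deg p\le n$. Writing $\diff\mu = w(x)\diff x$ for the absolutely continuous real measure $\mu$, set $W = \mathcal{B}_1 * \mu$ (a fixed $L^1$ function, in fact continuous since $w\in L^1$ convolved with the bounded $\mathcal{B}_1$) and note $\mathcal{B}_1 * p =: P$ runs over some affine space of functions as $p$ runs over degree-$\le n$ polynomials. Concretely, since $\widehat{\mathcal{B}_1}(k) = (2\pi\ii k)^{-1}$ for $k\ne 0$ and $p(0)=\widehat{\nu}(0)=1=\widehat\mu(0)$ forces $\widehat{\mu-\nu}(0)=0$, the difference $\mathcal{B}_1*(\mu-\nu)$ depends on $p$ only through its nonzero Fourier coefficients $\widehat p(k)$, $1\le|k|\le n$, which are free; hence $\{\mathcal{B}_1*(p-\mu) : \deg p\le n\} = V - W$ where $V = \sspan\{\eip{k\cdot} : 1\le|k|\le n\}$ is a $2n$-dimensional space of trigonometric polynomials with mean zero. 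So the best approximation problem becomes: minimize $\|W - P\|_{L^1(\T)}$ over $P\in V$, and $p$ is recovered bijectively from $P$ (the constant mode of $p$ is pinned to $1$, the rest inverted through $\widehat p(k) = 2\pi\ii k\,\widehat P(k)$). Uniqueness of the minimizing $p$ is therefore equivalent to uniqueness of the best $L^1$-approximation to $W$ from the Haar-type subspace $V$.

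The final step is to invoke the classical uniqueness criterion for $L^1$ approximation: a finite-dimensional subspace $V\subset L^1(\T)$ is a \emph{Chebyshev subspace for $L^1$} — i.e. every function has a unique best approximant — provided no nonzero element of $V$ vanishes on a set of positive measure, together with a sign/annihilation argument (see e.g.\ \cite[Ch.~1]{DeVore93}; this is the periodic analogue of the Jackson–Krein theory, and $V = \sspan\{\eip{k\cdot}:1\le|k|\le n\}$ is a Haar space there). A nonzero $P\in V$ is a trigonometric polynomial of degree $\le n$, hence has at most $2n$ zeros and in particular cannot vanish on a positive-measure set; the mean-zero constraint does not obstruct this. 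Then best $L^1$-approximants are characterized by the orthogonality $\int_\T \sgn(W-P^\*)\,Q = 0$ for all $Q\in V$, and the Haar property forces uniqueness via the standard argument that two distinct best approximants would make $W-P^\*$ vanish on a positive-measure set. I expect the main obstacle to be the careful bookkeeping in the reduction — verifying that the mean-zero constraint on $V$ is harmless, that $P\mapsto p$ is genuinely a bijection onto degree-$\le n$ polynomials with $p(0)=1$, and that the hypothesis "$\mu$ absolutely continuous" is exactly what makes $W$ a bona fide $L^1$ (indeed continuous) function so that the $L^1$ uniqueness theory applies; the $L^1$-approximation theory itself is classical and can be cited.
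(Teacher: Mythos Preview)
Your proposal takes essentially the same route as the paper: rewrite $W_1(\nu,\mu)$ as $\|\mathcal{B}_1*\nu-\mathcal{B}_1*\mu\|_{L^1(\T)}$, use absolute continuity of $\mu$ to guarantee that $\mathcal{B}_1*\mu$ is continuous, and then appeal to classical uniqueness of best $L^1$-approximation by trigonometric polynomials. The paper carries out exactly this reduction and then cites \cite[Thm.~3.10.9]{DeVore93} for uniqueness in the full space $T_n$, recovering $p^*$ from the unique $\tilde p$ via $\tilde p=\mathcal{B}_1*p^*$.

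One caveat on your final step: the criterion you invoke for $V=\sspan\{\eip{k\cdot}:1\le|k|\le n\}$ --- that no nonzero element vanishes on a set of positive measure --- is \emph{not} the hypothesis of Jackson's $L^1$-uniqueness theorem; what is needed is the Chebyshev/Haar condition (at most $\dim V-1$ zeros), and $V$ fails it since $\sin(2\pi n\cdot)\in V$ has $2n=\dim V$ zeros on $\T$. The paper sidesteps this by working in the full $(2n{+}1)$-dimensional space $T_n$, which \emph{is} a periodic Chebyshev system, so the cited theorem applies directly; the constant mode of $\tilde p$ is then irrelevant when reading off $p^*$.
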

\begin{proof}
Let $\mu,\nu\in\mathcal{M}_\R$ and $\mathcal{B}_1$ denote the Bernoulli spline of degree 1 from the proof of \cref{Thm_existence}, then we have
\begin{align*}
    W_1(\nu,\mu)&=\sup_{f: \Lip(f)\leq 1} \left|\int_{\T} f(x)\left[ \diff \nu(x) -\diff\mu(x)\right]\right| \\
    &=\sup_{f: \Lip(f)\leq 1} \left|\int_{\T}\int_{\T} f'(t)\mathcal{B}_1(x-t)\left[\diff \nu(x)  -\diff\mu(x)\right]\diff t\right| \\
    &=\sup_{f: \Lip(f)\leq 1} \left|\int_{\T} f'(t)\left[(\mathcal{B}_1*\nu)(t)-(\mathcal{B}_1*\mu)(t)\right] \diff t\right|.
\end{align*}
Since the integral over $f'$ is zero by the periodicity of $f$, any $c\in\R$ yields
\begin{align*}
    \left|\int_{\T} f'(t)\left[(\mathcal{B}_1*\nu)(t)-(\mathcal{B}_1*\mu)(t)\right] \diff t\right| &= \left|\int_{\T} f'(t)\left[(\mathcal{B}_1*\nu)(t)-(\mathcal{B}_1*\mu)(t)-c\right] \diff t\right| \\ &\leq \inf_{c\in\R} \int_{\T} \left|(\mathcal{B}_1*\nu)(t)-(\mathcal{B}_1*\mu)(t)-c\right| \diff t.
\end{align*} 
On the other hand, choosing $c^*$ such that $\{t: (\mathcal{B}_1*\nu)(t)-(\mathcal{B}_1*\mu)(t)>c^*\}$ and $\{t: (\mathcal{B}_1*\nu)(t)-(\mathcal{B}_1*\mu)(t)<c^*\}$ have the same mass yields
\begin{align*}
     \sup_{f: \Lip(f)\leq 1} \left|\int_{\T} f'(t)\left[(\mathcal{B}_1*\nu)(t)-(\mathcal{B}_1*\mu)(t)-c^*\right] \diff t\right| \geq \int_{\T} \left|(\mathcal{B}_1*\nu)(t)-(\mathcal{B}_1*\mu)(t)-c^*\right| \diff t
\end{align*}
by taking $f$ with $f'(t)=\pm 1$ depending on the sign of the term in brackets. Because of $\int_\T (\mathcal{B}_1*\nu-\mathcal{B}_1*\mu)(t)\diff t =0$, this gives $c^*=0$ and thus
\begin{align}\label{eq:W1L1}
     W_1(\nu,\mu)=\int_{\T} \left|(\mathcal{B}_1*\nu)(t)-(\mathcal{B}_1*\mu)(t)\right| \diff t.
\end{align}

We proceed by computing explicitly
\begin{align}\label{eq:B1mu}
    (\mathcal{B}_1*\mu)(t)
    = \frac{\mu([0,t))+\mu([0,t])}{2} - \mu([0,1))\left(t +\frac12\right)+\int_{[0,1)} x \diff\mu(x).
\end{align}
If $\mu$ does not give mass to single points, we have that $\mathcal{B}_1*\mu$
is continuous and hence there exists a unique best $L^1$-approximation $\tilde{p}$ (e.g.\,cf.\,\cite[Thm.\,3.10.9]{DeVore93}) which defines $p^*$ uniquely by $\tilde{p}=\mathcal{B}_1*p^*$. 
\end{proof}

\begin{example} \label{Ex:Unique_best}
Uniqueness and non-uniqueness of $L^1$ approximation is discussed in some detail in \cite{Moskona95,Dryanov12} and we note the following:
\begin{enumerate}[(i)]
    \item For $\mu=\frac{1}{2}\delta_0-\frac{1}{2}\delta_{1/2}+\lambda\in\MM_{\R}$ where $\lambda$ is again the Lebesgue measure, one finds
    \begin{align*}
        (\mathcal{B}_1*\mu)(t)=\frac12\left(\mathcal{B}_1(t)-\mathcal{B}_1\Big(t-\frac12\Big)\right)= \begin{cases} 0,&\quad t=0,\\ \frac14, & \quad t\in\big(0,\frac12\big), \\ 0,& \quad t=\frac12, \\ -\frac14,&\quad t\in\big(\frac12,1\big). \end{cases}
    \end{align*}
    As proved in \cite[Thm.~5.1]{Moskona95}, this function does not have a unique $L^1$ approximation and thus $\mu$ does not admit a unique approximation by a polynomial due to \cref{Lem_W11D} for even $n$. 
    \item For $\mu=\delta_0$ one has $\mathcal{B}_1*\mu=\mathcal{B}_1$ and according to \cite[Lem.~2.2]{Moskona95} this function with only one jump has a unique best $L^1$-approximation given by the interpolation polynomial
    \begin{align*}
        \tilde{p}(x)
        =\sum_{j=1}^n \frac{1}{2n+2} \cot\left(\frac{j\pi }{2n+2}\right) \sin(2\pi jx).
    \end{align*}
    Deconvolving $\tilde{p}=\mathcal{B}_1*p^*$ gives
    \begin{align*}
        p^*(x)
        = 1 + \sum_{j=1}^n \frac{j\pi }{n+1} \cot\left(\frac{j\pi}{2n+2}\right) \cos(2\pi jx)
    \end{align*}
    as the unique best approximation to $\delta_0$. Since the error of the best $L^1$ approximation of $\mathcal{B}_1$ is known from a theorem by Favard \cite{Favard1937} (e.g.\,this is mentioned in \cite[p.\,213]{DeVore93}), we can compute
    \begin{align*}
        W_1(\delta_0,p^*)=\left\|\mathcal{B}_1-\mathcal{B}_1*p^*\right\|_{L^1(\T)}=\frac{1}{4(n+1)}
    \end{align*}
    and this reveals that the estimates in the proof of \cref{Thm_existence} are sharp.
\end{enumerate}
\end{example}

\cref{fig:BestApprox} and \cref{tab:W1p} summarize our findings on the approximation of $\delta_0$. The best approximation $p^*$ as well as the Dirichlet kernel $D_n(x)=\sin(2n+1)\pi x /\sin\pi x$ are signed with small full width at half maximum but positive and negative oscillations at the sides. The latter might be seen as an unwanted artifact in applications. The approximations given by the Fejér and the Jackson kernel are non-negative.
\begin{figure}[htb]
        \centering
        \includegraphics[width=\columnwidth]{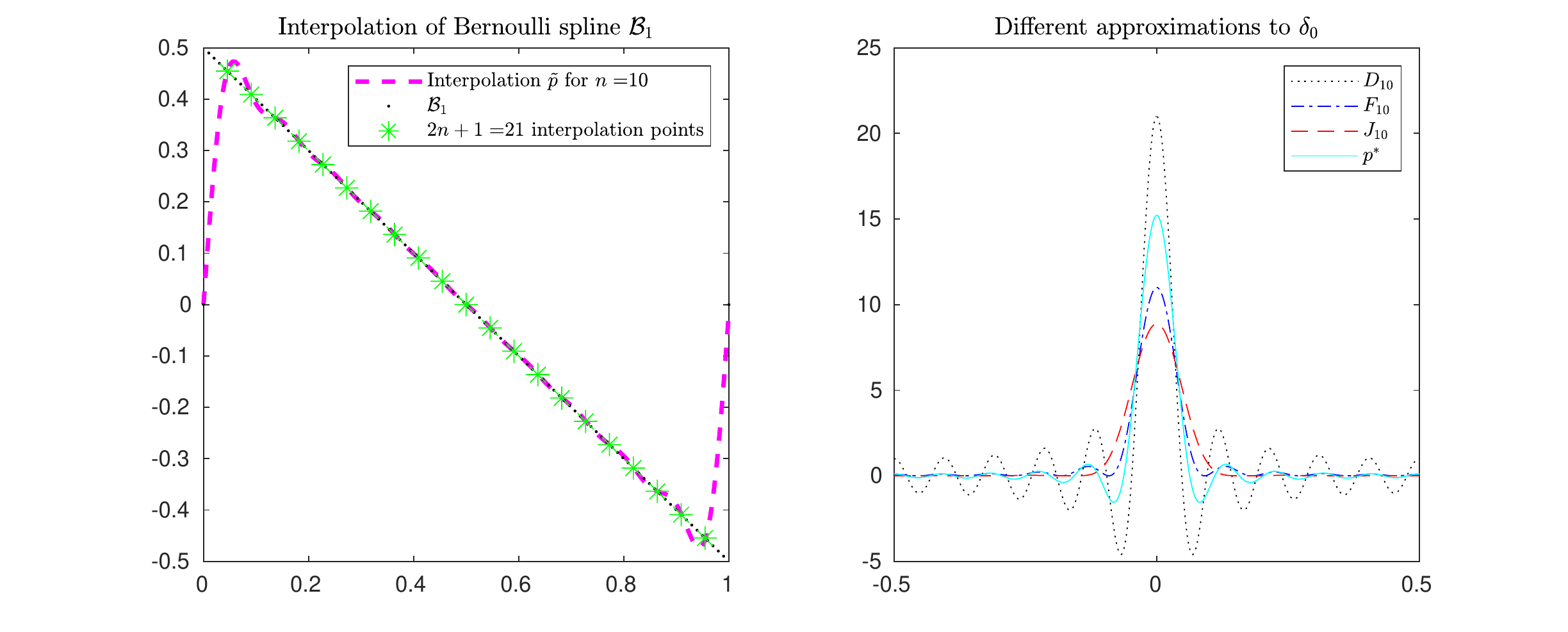}
        \caption{Interpolation of $\mathcal{B}_1$ (left) and comparison of different polynomial approximations of degree $n=10$ to $\delta_0$ (right).}
        \label{fig:BestApprox}
    \end{figure}
For completeness, we note that the Dirichlet kernel is the Fourier partial sum of $\delta_0$ and allows for the estimate
    \begin{align*}
       W_1(\delta_0,D_n)\leq W_1(\delta_0,p^*) + W_1(p^*,D_n) \leq \left(1 + \|D_n\|_1\right) W_1(\delta_0,p^*) \leq \frac{\frac{4}{\pi^2} \log(n)+5}{4(n+1)}
   \end{align*} 
   which relies on $W_1(p^*,D_n)= W_1(D_n*p^*,D_n*\delta_0)\leq \|D_n\|_1 W_1(\delta_0,p^*)$,
   the well known bound on the Lebesgue constant \cite[Prop.\,1.2.3]{BuNe71}, and \cref{Ex:Unique_best} (iii).
    \begin{table}[h]
        \centering
        \begin{tabular}{|c|c|l|}
        \hline
         Trigonometric polynomial  & Sign of polynomial & $W_1(\delta_0,K_n)$  \\\hline
        \rule{0pt}{15pt}   Dirichlet $D_n$  & signed & $\leq  \frac{\frac{1}{\pi^2}\log(n)+\frac{5}{4}}{n+1}$ (\cref{Ex:Unique_best} (iii)) \\[8pt]\hline
  \rule{0pt}{15pt} Fejér $F_n$ & nonnegative &  $\leq \frac{1}{\pi^2}\frac{\log(n+1) + 3}{n}$ (\cref{thm:W1p}) \\[8pt]\hline
   \rule{0pt}{15pt} Jackson $J_n$, $n$ even & nonnegative & $\leq \frac{3}{2} \frac{1}{n+2}$ (\cref{Rem_Jackson})\\[8pt]\hline
   \rule{0pt}{15pt} Best approximation $p^*$ & signed & $= \frac{1}{4(n+1)}$ (\cref{Ex:Unique_best} (ii)) \\[8pt]\hline
        \end{tabular}
        \caption{Convergence rates of different trigonometric polynomials approximating $\delta_0$. }
        \label{tab:W1p}
    \end{table}
    
\begin{remark}
We close by some remarks which are specific for the univariate setting:
\begin{enumerate}[(i)]
    \item We stress that \eqref{eq:W1L1} in the proof of the \cref{Lem_W11D} allows to compute the Wasserstein distance as an $L^1$-distance for real signed univariate measures.
    Similarly, this allows to compute the so-called star discrepancy $\|\nu([0,\cdot))\|_{\infty}$ as suggested in \cite[eq. (2.1) and (2.2)]{Mh19}. However note that \eqref{eq:B1mu} has some additional term such that $\nu=\frac{1}{2}\delta_0-\frac{1}{2}\delta_{1/2}$ with $\nu(\T)=0$ gives
    \begin{align*}
        \|\nu([0,\cdot))\|_{\infty} = \frac12  \neq  \frac14 = \|\mathcal{B}_1*\nu\|_{\infty}
    \end{align*}
    and thus \cite[eq. (2.1) and (2.2)]{Mh19} needs some adjustment.

   On the real line, we indeed have $\mu((-\infty,x])=(H*\mu)(x)$ for the Heaviside function
\begin{align*}
    H(x)=\begin{cases} 1, & \quad x\geq 0, \\ 0, & \quad \text{else,} \end{cases}
\end{align*}
   such that the Wasserstein distance again can be computed via
\begin{align*}
    W_1(\mu,\nu)=\int_{\R} \left|\mu((-\infty,x])-\nu((-\infty,x])\right| \diff x
    = \|H*(\mu-\nu)\|_{L^1(\R)},
\end{align*}
see e.g.~\cite[Prop.\,2.17]{Santambrogio_15}.
 \cref{Lem_W11D} might be considered as the periodic analogue of this result.
    
\item One can relate our work to a main result in \cite{Mh19}. As \cref{Lem_W11D} reformulates the Wasserstein distance of two univariate measures in terms of the $L^1$-distance of their convolution with the Bernoulli spline, one can view this Bernoulli spline as a kernel of type $\beta=1$ following the notation of \cite{Mh19}. Thus, one can take $p=1,p'=\infty$ in \cite[Thm.\,4.1]{Mh19} yielding that the Wasserstein distance between a measure $\mu$ and its trigonometric approximation is bounded from above by $c/n$. The latter agrees with our \cref{Rem_Jackson} which additionally gives an explicit and small constant. 

\item The observation that the construction of $p^*$ for $\delta_0$ is possible via FFT's might lead to the idea to construct near-best approximations to any measure $\mu$ by interpolating $\mathcal{B}_1*\mu$ by some $\tilde{p}$ and to obtain the polynomial $p$ of near best approximation which satisfies $\tilde{p}=\mathcal{B}_1*p$ by multiplying with the Fourier coefficients of the Bernoulli spline $\mathcal{B}_1$. A first problem would be that the limited knowledge of moments only allows to interpolate the partial Fourier sum $S_n(\mathcal{B}_1*\mu)$ which does not converge to $\mathcal{B}_1*\mu$ uniformly as $n\to\infty$ for discrete $\mu$. Secondly, the near-best approximation $p$ cannot be expected to be nonnegative for a nonnegative measure $\mu$ which is another drawback compared to convolution with nonnegative kernels like the Fejér or Jackson kernel.

\item Finally note that kernels $K_n$ with stronger localization and `smoother' Fourier coefficients, e.g.~higher powers of the Fejér kernel, allow to improve the rate beyond $n^{-1}$ if the measure has a smooth density $w$. This can be seen from partial integration
 \begin{align*}
  W_1(K_n * w, w)
  &=\sup_{\Lip(f)\le 1} \left|\int_{\T} \left(K_n*f(y)-f(y)\right) w(y) \diff y\right|\\
  &=\sup_{\psi, \Lip(\psi')\le 1} \left|\int_{\T} \left(K_n*\psi(y)-\psi(y)\right) w'(y) \diff y\right|
 \end{align*}
 and the above arguments. However note that from a practical perspective, this asks for a-priori smoothness assumptions on the measure to choose a suitable kernel.
\end{enumerate}
\end{remark}

\section{Interpolation}\label{sec:interp}

Using the singular functions of the moment matrix as in \cref{eq:pnuj} and with $r=r(n)=\rank T_n$, we define noise- and signal-functions $p_{0,n},p_{1,n}:\T^d\to [0,1]$,
\begin{align}\label{eq:p1}
 p_{1,n}(x)=\frac{1}{N} \sum_{j=1}^r \left|u^{(n)}_j(x)\right|^2,\qquad
 p_{0,n}(x)=\frac{1}{N} \sum_{j=r+1}^N \left|u^{(n)}_j(x)\right|^2,
\end{align}
respectively.
In what follows, we suppose that $V\subseteq\T^d$ denotes the smallest set
containing $\supp\mu$ that is the zero-locus of some unknown trigonometric polynomial,
i.\,e.\ $V \supseteq \supp \mu$ is the Zariski closure of the support. 
We show pointwise convergence
\begin{align}\label{eq:chiV}
 p_{1,n}(x) \xrightarrow{n\to\infty} \chi_V(x)=\begin{cases} 1, & x\in V,\\ 0, & \text{otherwise},\end{cases}
\end{align}
to the characteristic function of this set. Clearly this cannot be uniform and our first result shows interpolation of the value $1$ for finite $n$ as well as a variational characterization.

\begin{thm}\label{thm:p1characterization}
 Let $d,n\in\N$, $\mu\in\MM$, and suppose $V(\ker T_n)=V\subseteq\T^d$, where $V(\ker T_n)$ is the set consisting of the common roots of all the polynomials in $\ker T_n$. Then $p_{0,n}(x)+p_{1,n}(x)=1$ for all $x\in\T^d$. In particular, we have
 \begin{equation}\label{eq:p1interpolating1}
  \begin{aligned}
  p_{1,n}(x)\begin{cases}
  =1, & \text{\ if $ x\in V$},\\
  <1, & \text{otherwise}.
  \end{cases}
  \end{aligned}
 \end{equation}
 If $V\subsetneq\T^d$, the variational characterization
 \begin{align}\label{eq:var:p0}
    p_{0,n}(x)=\max_{p} \frac{|p(x)|^2}{N \|p\|_{L^2}^2} 
\end{align}
 holds, where the maximum is subject to all trigonometric polynomials
 $p\in\langle \eip{kx}: k\in [n]\rangle$, $p\ne 0$, such that $p(y)=0$ for all $y\in V$.
\end{thm}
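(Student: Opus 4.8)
The plan is to exploit the singular value decomposition $T_n = U_n\Sigma_n V_n^*$ together with the fact that, viewing $e_n(x) = (\eim{kx})_{k\in[n]}$ as the evaluation functional on the space $\Pi_n := \langle \eip{kx} : k\in[n]\rangle$, the quantity $N\cdot p_{1,n}(x) + N\cdot p_{0,n}(x) = \sum_{j=1}^N |u_j^{(n)}(x)|^2 = \|e_n(x)^* U_n\|_2^2 = \|e_n(x)\|_2^2 = (n+1)^d = N$, since $U_n$ is unitary. This immediately yields $p_{0,n}(x)+p_{1,n}(x)=1$ for every $x$, with no assumption on $V$ needed.

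Next I would establish the interpolation property \eqref{eq:p1interpolating1} via the orthogonality structure. The last $N-r$ columns of $U_n$ span $\range(T_n)^\perp = \ker(T_n^*) = \ker(T_n)$ (the last equality because $T_n$ is normal — indeed $\ker T_n = \ker T_n^*$ holds for any matrix, via $\|T_n q\|$ versus $\|T_n^* q\|$ arguments, and here we simply use $\ker(U_n\Sigma_n V_n^*)$, noting that $\ker T_n$ and $\range T_n^*$ are orthogonal complements). Concretely, $u_j^{(n)}(x) = e_n(x)^* (U_n)_{\cdot,j}$, and for $j>r$ the column $(U_n)_{\cdot,j}$ is a coefficient vector of a polynomial $p_j\in\ker T_n\subseteq\Pi_n$, so $u_j^{(n)}(x) = \overline{p_j(x)}$. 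Hence $N\, p_{0,n}(x) = \sum_{j>r}|p_j(x)|^2$ where $\{p_j\}_{j>r}$ is an orthonormal basis of the subspace of $\Pi_n$ whose coefficient vectors lie in $\ker T_n$; but membership of the coefficient vector in $\ker T_n$ is (by the quadratic-form computation in \cref{thm:Tn}, $q^*T_nq = \int |\sum q_k\eip{ky}|^2\diff\mu(y)$, so $T_nq=0 \iff \sum q_k\eip{ky}=0$ $\mu$-a.e., and hence on $\supp\mu$ by continuity, hence on $V$) exactly the condition $p_j|_V\equiv 0$. If $x\in V$ then every $p_j(x)=0$, so $p_{0,n}(x)=0$ and $p_{1,n}(x)=1$. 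If $x\notin V$, then by the hypothesis $V(\ker T_n)=V$ there is some $q\in\ker T_n$ with $q(x)\ne 0$, so $p_{0,n}(x)>0$, giving $p_{1,n}(x)<1$.

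For the variational characterization \eqref{eq:var:p0}, the right-hand side is $\tfrac1N$ times the squared norm of the evaluation functional $p\mapsto p(x)$ restricted to the subspace $W := \{p\in\Pi_n : p|_V\equiv 0\}$, which by the previous paragraph has the orthonormal basis $\{p_j\}_{j>r}$. By the Riesz representation / Bessel-equality for the restriction of a functional to a finite-dimensional inner-product space, $\max_{0\ne p\in W} |p(x)|^2/\|p\|_{L^2}^2 = \sum_{j>r}|\langle p_x^W, p_j\rangle|^2$ evaluated appropriately — more directly, the evaluation functional on $W$ is represented by the reproducing element $\sum_{j>r} \overline{p_j(x)}\, p_j$, whose squared norm is $\sum_{j>r}|p_j(x)|^2 = N\,p_{0,n}(x)$; therefore $\max_{0\ne p\in W}|p(x)|^2/\|p\|_{L^2}^2 = N\,p_{0,n}(x)$, i.e. $p_{0,n}(x) = \max_p |p(x)|^2/(N\|p\|_{L^2}^2)$. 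The assumption $V\subsetneq\T^d$ guarantees $W\ne\{0\}$ so the maximum is over a nonempty set (and is attained, $W$ being finite-dimensional; when $p_{0,n}(x)=0$ the max is $0$, attained by any $p\in W$).

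The main obstacle is bookkeeping the identification $\ker T_n \leftrightarrow \{p\in\Pi_n : p|_V\equiv 0\}$ cleanly: one direction ($T_nq=0\Rightarrow \sum q_k\eip{ky}=0$ on $\supp\mu$, hence on $V$) uses the positive-semidefinite quadratic form from \cref{thm:Tn} and continuity, but the reverse inclusion and the jump from $\supp\mu$ to its Zariski closure $V$ require care — here it is exactly the hypothesis $V(\ker T_n)=V$ that is doing the work, ensuring the common zero set of $\ker T_n$ is precisely $V$, so that ``vanishing on $V$'' and ``coefficient vector in $\ker T_n$'' coincide as conditions on polynomials of degree $\le n$. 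Everything else is linear algebra of orthonormal bases and unitarity.
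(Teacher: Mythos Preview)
Your approach is essentially the paper's: unitarity of $U_n$ for $p_{0,n}+p_{1,n}=1$, the correspondence between $\ker T_n$ and polynomials vanishing on $V$ for the interpolation property, and a Riesz-representer argument for the variational formula (your reproducing element $\sum_{j>r}\overline{p_j(x)}\,p_j$ is exactly the paper's $q=U_0U_0^*e_n(x)$, and your norm identity is their Cauchy--Schwarz equality case).

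There is, however, one concrete error. You write that ``$\ker T_n=\ker T_n^*$ holds for any matrix'': this is false (take any nonzero nilpotent matrix). For general $\mu\in\MM$ the moment matrix is neither Hermitian nor normal, so the last $N-r$ columns of $U_n$ a~priori span $\ker T_n^*$, not $\ker T_n$. Your quadratic-form route $q^*T_nq=0\Rightarrow T_nq=0$ likewise requires $T_n\succeq 0$, i.e.\ $\mu\in\MM_+$, which is not assumed. The correct fix --- which your final paragraph almost reaches --- goes through the hypothesis directly: if $p_q$ vanishes on $\supp\mu$ then $(T_nq)_k=\int_{\T^d} e^{-2\pi iky}p_q(y)\,\diff\mu(y)=0$ for every $k$ (no positivity needed), so $\{q:p_q|_V=0\}\subseteq\{q:p_q|_{\supp\mu}=0\}\subseteq\ker T_n$; combined with $V(\ker T_n)=V$ this forces $\ker T_n=\{q:p_q|_V=0\}$. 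The same inclusion into $\ker T_n^*$ (since $T_n^*$ is the moment matrix of $\bar\mu$, which has the same support) together with $\dim\ker T_n=\dim\ker T_n^*$ then gives $\ker T_n=\ker T_n^*$. The paper also asserts $\ker T_n=\langle u_{r+1}^{(n)},\ldots,u_N^{(n)}\rangle$ without spelling this out, so the gap is shared; but your stated justification for it is incorrect and should be replaced by the argument above.
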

\begin{proof}
  We have
  \begin{equation}\label{eq:p1p0sos}
    p_{1,n}(x) + p_{0,n}(x)
    = \frac{1}{N} \sum_{j=1}^N \abs{u_j^{(n)}(x)}^2
    = \frac{1}{N} e_n(x)^* U_n U_n^* e_n(x)
    = \frac{1}{N} e_n(x)^* e_n(x)
    = 1,
  \end{equation}
  so in particular $p_{1,n}(x) \in [0,1]$.
  Since $V(\ker T_n) = V$ and $\ker T_n = \langle u_{j+1}^{(n)},\dots,u_N^{(n)}\rangle$,
  it follows that the polynomials $u_{j+1}^{(n)}(x),\dots,u_N^{(n)}(x)$ vanish on $V$,
  so $p_{1,n}(x) = 1$ for all $x\in V$.
  Conversely, if $x\in\T^d$ such that $p_{1,n}(x) = 1$,
  we claim that $x\in V$.
  Indeed, we have $1 - p_{1,n}(x) = \sum_{j=r+1}^N \abs{u_j^{(n)}(x)}^2 = 0$,
  so it follows that $x$ lies in the vanishing set of $u_{j+1}^{(n)}(x),\dots,u_N^{(n)}(x)$,
  so $x\in V(\ker T_n) = V$.

  For the variational characterization,
  first note that the admissible polynomials belong to $\ker T_n \setminus\{0\}$,
  which is non-empty since $V(\ker T_n) = V \subsetneq \T^d$.
  Let $q \coloneqq U_0 U_0\adj e_n(x) \in \ker T_n$,
  where $U_0$ denotes a matrix whose columns form an orthonormal basis of $\ker T_n$.
  For all $p \in \ker T_n$, we have
  \[
    q\adj p = e_n(x)\adj U_0 U_0\adj p = e_n(x)\adj p = p(x).
  \]
  In particular, note that
  \begin{equation}\label{eq:p1variational:qq}
    \norm[2]{q}^2 =
    q\adj q = q(x) = e_n(x)\adj U_0 U_0\adj e_n(x) = N p_{0,n}(x).
  \end{equation}
  Therefore, by the Cauchy--Schwarz inequality, it follows that
  \[
    \abs{p(x)}^2 = \abs{q\adj p}^2
    \le \norm[2]{q}^2 \cdot \norm[2]{p}^2
    = N p_{0,n}(x) \norm[2]{p}^2.
  \]
  Hence, we have
  \[
    p_{0,n}(x)
    \ge \max_{p\in\ker T_n\setminus\{0\}} \frac{\abs{p(x)}^2}{N \norm[2]{p}^2}
    \ge \frac{\abs{q(x)}^2}{N \norm[2]{q}^2}
    = p_{0,n}(x),
  \]
  if $q\ne 0$.
  The first inequality also holds when $q=0$,
  in which case the result follows due to \eqref{eq:p1variational:qq}.
\end{proof}

Note that \eqref{eq:p1interpolating1} and \eqref{eq:p1p0sos} generalise and strengthen
\cite[Propositions~5.2,\ 5.3]{ongie16}
from algebraic curves on the two-dimensional torus
to algebraic varieties of arbitrary dimension.

\begin{remark}
  The hypothesis $V(\ker T_n) = V$ in \cref{thm:p1characterization}
  is satisfied for all sufficiently large $n$
  if $\mu\in\MM$ is finitely-supported, see e.g.~\cite{KuNaSt22}.
  Similarly, it holds for sufficiently large $n$
  if $\mu\in\MM_+$;
  see for example \cite[Theorem~2.10]{laurentrostalski2012}
  or \cite[Proposition~4.10]{wageringel2022:truncated}.

  If $\mu\in\MM$ is not finitely-supported,
  then $V(\ker T_n) = V$ can fail to hold for any $n\in\N$
  (cf.~\cite[Example~4.9]{wageringel2022:truncated}).
  In this case, it is possible to rephrase the hypothesis
  in terms of a non-square moment matrix of suitable size
  (cf.~\cite[Theorem~4.3]{wageringel2022:truncated})
  to obtain a statement similar to \cref{thm:p1characterization}.
\end{remark}

\begin{example}
 For $\mu=\delta_0$, we have $p_{1,n}(x)=F_n(x)/(n+1)^d$ and the proofs of Theorem \ref{Thm_pointwise_conv} and \ref{thm:p1weak} also show that $p_{1,n}$ is close to a sum of normalized Fejér kernels for arbitrary discrete measures.
 
 Moreover note that the above construction only yields the Zariski closure for general support sets, i.\,e., $p_{1,n}(x)=1$ for all $x\in\T^d$ and $n\in\N$ holds
 for every support set that is not contained in the zero locus of any non-trivial trigonometric polynomial.

 A singular continuous measure with support on the zero locus of a specific trigonometric polynomial is discussed as a numerical example in \cref{sec:num}.
\end{example}

\subsection{Zero-dimensional situation}

If the measure is given by
\begin{align*}
    \mu=\sum_{j=1}^r \lambda_j \delta_{x_j}
\end{align*}
with support $V=\supp\mu=\{x_1,\hdots,x_r\}\subset\T^d$ and complex weights $\Lambda=\diag(\lambda_1,\hdots,\lambda_r)$, then the support is zero-dimensional and the moment matrix allows for the Vandermonde factorisation
\begin{align*}
 T_n=A_n^*\Lambda A_n,\qquad A_n=(\eip{kx_j})_{j=1,\hdots,r; k\in[n]}\in\C^{r\times N},
\end{align*}
which will be instrumental.

\begin{thm}[Pointwise convergence] \label{Thm_pointwise_conv}
  Let $\mu = \sum_{j=1}^r \lambda_j \delta_{x_j}$ be a discrete measure
  and let $x\in\T^d$ such that $x\ne x_j$ for all $1\le j\le r$. If $n+1>4d/\min_{j\ne\ell}|x_j-x_\ell|_{\infty}$, then
  \begin{align*}
      p_{1,n}(x) \le \frac{1}{(n+1)^2} \cdot \frac{|\lambda_{\max}|}{|\lambda_{\min}|} \cdot \frac{1}{3}\sum_{j=1}^r \frac{1}{ |x - x_j|_{\infty}^2},
  \end{align*}
  in particular, this implies the pointwise convergence \eqref{eq:chiV}.
  Moreover, if the support of $\mu$ fulfils $n+1>2\sqrt{d}/\min_{j\ne\ell}|x_j-x_\ell|_{\infty}$ and $\min_j |x-x_j|_{\infty}\le \sqrt{d}/(n+1)$, then
  \begin{align*}
      p_{1,n}(x) \le 1- \frac{3^{d-1}(2d-1)}{2^d d^{2+d/2}} \cdot (n+1)^2 \cdot \min_j |x-x_j|_{\infty}^2.
  \end{align*}
\end{thm}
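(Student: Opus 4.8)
The plan is to read $p_{1,n}$ off the Vandermonde factorization $T_n = A_n\adj \Lambda A_n$ and to reduce everything to the size of a tensor-product Dirichlet kernel and to the conditioning of the Gram matrix of the nodes. Set $d_n(t) = \sum_{k=0}^n \eip{kt}$, so that $\abs{d_n(t)} = \abs{\sin((n+1)\pi t)}/\abs{\sin(\pi t)} \le \min\{\,n+1,\ (2\abs{t}_\T)^{-1}\,\}$. The separation hypotheses on $n$ force $A_n$ to have full row rank $r$, hence $\range T_n = \range A_n\adj = \sspan\{e_n(x_1),\dots,e_n(x_r)\}$, and therefore
\[
  p_{1,n}(x) = \frac1N \norm[2]{P e_n(x)}^2 = \frac1N\, b(x)\adj G^{-1} b(x),
\]
where $P$ is the orthogonal projection onto $\range A_n\adj$, $G \coloneqq A_n A_n\adj = \bigl(\prod_{s=1}^d d_n(x_{j,s}-x_{\ell,s})\bigr)_{j,\ell}$, and $b(x) \coloneqq A_n e_n(x) = \bigl(\prod_{s=1}^d d_n(x_{j,s}-x_s)\bigr)_{j=1}^{r}$. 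The weights enter only here: if one prefers to work directly with the singular values of $T_n = A_n\adj\Lambda A_n$ instead of identifying $\range T_n$ with $\range A_n\adj$, one pays the factor $\abs{\lambda_{\max}}/\abs{\lambda_{\min}}$ through $\abs{\lambda_{\min}}\sigma_{\min}(A_n)^2 \le \sigma_r(T_n) \le \abs{\lambda_{\max}}\sigma_{\min}(A_n)^2$, which is where that factor in the statement comes from.

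The first real step is the conditioning of $G = (n+1)^d I_r + E$: its diagonal equals $(n+1)^d$ and its off-diagonal entries satisfy $\abs{E_{j\ell}} = \prod_s \abs{d_n(x_{j,s}-x_{\ell,s})} \le (n+1)^{d-1}/(2\abs{x_j-x_\ell}_\infty)$. Using that the nodes are $q$-separated with $q \coloneqq \min_{j\ne\ell}\abs{x_j-x_\ell}_\infty$, one shows that $n+1 > 4d/q$ already forces $\norm[2]{E} \le \tfrac14(n+1)^d$, so that $\tfrac34(n+1)^d \le \lambda_{\min}(G) \le \lambda_{\max}(G) \le \tfrac54(n+1)^d$, with the exact numerical constants tuned to the hypothesis. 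I expect this to be the main obstacle: a plain single-row Gershgorin bound is far too lossy for $d\ge 2$ (it would require $n+1 \gtrsim q^{-d}$), and one has to invoke a large-sieve / Ingham-type estimate for the smallest singular value of $A_n$, tensorised over the $d$ coordinates via a Selberg-type majorant, to get by with the mild threshold $n+1 \gtrsim d/q$.

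For the first assertion, with $x\ne x_j$ for all $j$: in each factor of $b(x)_j$ bound the coordinate attaining $\abs{x-x_j}_\infty$ by $(2\abs{x-x_j}_\infty)^{-1}$ and the other $d-1$ coordinates by $n+1$, which gives $\norm[2]{b(x)}^2 \le \tfrac14(n+1)^{2d-2}\sum_{j=1}^r \abs{x-x_j}_\infty^{-2}$. Combined with $p_{1,n}(x) \le \norm[2]{b(x)}^2/(N\lambda_{\min}(G))$ and $N=(n+1)^d$ this yields the stated bound, the constant $\tfrac13$ and the factor $\abs{\lambda_{\max}}/\abs{\lambda_{\min}}$ coming from the conditioning just discussed; since for fixed $x\notin V$ the right-hand side tends to $0$ as $n\to\infty$ while $p_{1,n}\equiv 1$ on $V$ by \cref{thm:p1characterization}, the pointwise convergence \eqref{eq:chiV} follows.

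For the second assertion, pick $x_{j_0}$ realizing $\delta \coloneqq \min_j\abs{x-x_j}_\infty = \abs{x-x_{j_0}}_\infty$. Since $b(x_{j_0})$ is the $j_0$-th column of $G$, writing $b(x) = G e_{j_0} + \Delta$ with $\Delta_j = b(x)_j - G_{j,j_0}$ and using $G\adj = G$ gives the exact identity
\[
  N\bigl(1 - p_{1,n}(x)\bigr) = -2\re\Delta_{j_0} - \Delta\adj G^{-1}\Delta, \qquad -\re\Delta_{j_0} = (n+1)^d - \re\!\prod_{s=1}^d d_n(x_{j_0,s}-x_s).
\]
A second-order expansion of $\re d_n(t) = \sum_{k=0}^n\cos(2\pi kt)$ and $\im d_n(t) = \sum_{k=0}^n\sin(2\pi kt)$ — using $\sum_{k=0}^n k^2 = \tfrac16 n(n+1)(2n+1)$ and $\sum_{k=0}^n k = \tfrac12 n(n+1)$, valid in the window $\delta \le \sqrt d/(n+1)$ of the hypothesis — shows $-2\re\Delta_{j_0} \ge c_d\,(n+1)^{d+2}\delta^2$ with an explicit $d$-dependent $c_d$; the entries of $\Delta$ with index $\ne j_0$ are negligible because $x$ and $x_{j_0}$ both lie at $\infty$-distance $\gtrsim q$ from the corresponding $x_j$, while the $j_0$-entry of $\norm[2]{\Delta}^2$ is again governed by the same expansion, so that $\Delta\adj G^{-1}\Delta \le \norm[2]{\Delta}^2/\lambda_{\min}(G)$ is absorbed into $-2\re\Delta_{j_0}$ with a definite fraction to spare; dividing by $N$ leaves the claimed bound $1-p_{1,n}(x) \ge \tfrac{3^{d-1}(2d-1)}{2^d d^{2+d/2}}(n+1)^2\delta^2$ once the constants are matched. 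The delicate point here — a second obstacle — is that the tensor Dirichlet kernel has imaginary part of order $(n+1)^d$ near the support, so the sign-indefinite cross term $\Delta\adj G^{-1}\Delta$ and the higher-order terms of the expansion have to be bookkept carefully; that is precisely what forces the restriction $\delta \le \sqrt d/(n+1)$ and the somewhat baroque final constant.
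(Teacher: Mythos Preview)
Your treatment of the first inequality is essentially the paper's: both rest on the Dirichlet/Fej\'er decay $|d_n(t)|\le(2|t|_\T)^{-1}$ and on an Ingham--Selberg lower bound for $\sigma_{\min}(A_n)^2=\lambda_{\min}(G)$, the latter being exactly what the paper imports from \cite[Thm.~4.2]{KuNaSt22}. Your projection formula $p_{1,n}(x)=\tfrac1N\,b(x)^*G^{-1}b(x)$ is in fact slightly cleaner than the paper's route through $\sigma_j/\sigma_{\min}$ and the identity $\tfrac1N\sum_j\sigma_j|u_j|^2=\sum_j|\lambda_j|F_n(x-x_j)$, since it makes transparent that $p_{1,n}$ depends only on $\range A_n^*$ and not on the weights; the factor $|\lambda_{\max}|/|\lambda_{\min}|$ in the statement is thus an artefact of the paper's bookkeeping rather than intrinsic.

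For the second inequality the two arguments diverge genuinely. The paper does \emph{not} Taylor-expand the Schur complement. Instead it invokes the variational characterisation \eqref{eq:var:p0}: testing $p_{0,n}$ with the Lagrange polynomial $\ell_{r+1}(y)=e_{r+1}^*\tilde A_{n,x}^\dagger e_n(y)$ built from the augmented Vandermonde matrix $\tilde A_{n,x}=[e_n(x_1),\dots,e_n(x_r),e_n(x)]$ gives, in one line,
\[
 1-p_{1,n}(x)=p_{0,n}(x)\ \ge\ \frac{|\ell_{r+1}(x)|^2}{N\|\ell_{r+1}\|_{L^2}^2}\ \ge\ \frac{\sigma_{\min}(\tilde A_{n,x})^2}{N},
\]
and the specific constant $\tfrac{3^{d-1}(2d-1)}{2^d d^{2+d/2}}$ is then read off verbatim from the clustering-node estimate \cite[Cor.~3.20]{HoKu21}. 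This is short and exports all the quantitative work to an external lemma. Your direct expansion of $N(1-p_{1,n}(x))=-2\re\Delta_{j_0}-\Delta^*G^{-1}\Delta$ is more self-contained but also more fragile: the dominant piece $|\Delta_{j_0}|^2/\lambda_{\min}(G)$ of the correction term is driven by $(\im\Delta_{j_0})^2\sim(n+1)^{2d}\pi^2n^2(\sum_s t_s)^2$ and is therefore of the \emph{same} order $(n+1)^{d+2}\delta^2$ as the main term $-2\re\Delta_{j_0}$, so the ``absorption with a definite fraction to spare'' is a comparison of constants, not of orders. Moreover, in the full window $\delta\le\sqrt d/(n+1)$ the expansion parameter $(n+1)\delta$ can be as large as $\sqrt d$, so a second-order Taylor approximation of $d_n$ is not uniformly justified and you would need an exact (non-perturbative) lower bound on $(n+1)^d-\re\prod_s d_n(t_s)$ to close the argument. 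In particular it is not evident that your route recovers the stated constant, which in the paper is inherited from the cited Vandermonde bound rather than computed in situ.
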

\begin{proof}
  Comparing \eqref{eq:p1} with \cref{thm:Tn} and using \eqref{eq:pn} yields
  \begin{align*}
    p_{1,n}(x)
    &\le \frac{1}{N} \sum_{j=1}^r \frac{\sigma_j}{\sigma_{\min}} \abs{u^{(n)}_j(x)}^2
    = \frac{1}{\smin} \sum_{j=1}^r |\lambda_j| F_n(x-x_j).
  \end{align*}
  The final estimate follows from
  \begin{align*}
    F_n(x-x_j) \le \frac{(n+1)^{d-1}}{(n+1) \sin^2\lr{\pi |x - x_j|_{\infty}}} \le \frac{(n+1)^{d-2}}{4 |x - x_j|_{\infty}^2}
  \end{align*}
  and $\sigma_{\min}\ge 0.8 (n+1)^d |\lambda_{\min}|$, see \cite[Thm.~4.2]{KuNaSt22}.
  
  Regarding the second estimate, consider the Vandermonde matrix
  \begin{align*}
    \tilde{A}_{n,x}=\begin{bmatrix} e_n(x_1) & \cdots & e_n(x_r) & e_n(x)\end{bmatrix} \in\C^{N\times (r+1)}
  \end{align*}
  and note that its pseudo-inverse gives rise to the Lagrange polynomial
  \begin{align*}
    \ell_{r+1}(y)=e_{r+1}^*\tilde{A}_{n,x}^\dagger e_n(y).
  \end{align*}
  satisfying $\ell_{r+1}(x_j)=0$ for $j=1,\hdots,r$ and $\ell_{r+1}(x)=1$.
  We compute
 \begin{align*}
    \|\ell_{r+1}\|_{L^2}^2 &=\int_{\T^d} |e_{r+1}^*\tilde{A}_{n,x}^\dagger e_n(y)\rangle|^2 \d y \\
    &=\int_{\T^d} |\langle \tilde{A}_{n,x}^{\dagger *} e_{r+1},  e_n(y)\rangle|^2 \d y
    =\|\tilde{A}_{n,x}^{\dagger *} e_{r+1}\|_2^2\le \sigma_{\min}(\tilde{A}_{n,x})^{-2}
 \end{align*}
 and use \cref{thm:p1characterization} to bound 
 \begin{align*}
    1-p_{1,n}(x)=p_{0,n}(x)=\max_{p} \frac{|p(x)|^2}{N \|p\|_{L^2}^2} \ge
    \frac{|\ell_{r+1}(x)|^2}{N \|\ell_{r+1}\|_{L^2}^2}
    \geq \frac{\sigma_{\min}(\tilde{A}_{n,x})^{2}}{N}.
 \end{align*}
 The assertion follows from known estimates on the smallest singular value for the Vandermonde matrix with pairwise clustering nodes, see \cite[Corollary\,3.20]{HoKu21}.
\end{proof}

\begin{remark} \label{remark_p1_upper_bound}
  Actually, \cref{Thm_pointwise_conv} shows the correct orders in $n$ and $\min_j |x-x_j|_{\infty}^2$ in the upper bound of $p_{1,n}(x)$. 
  First note that $1-p_{1,n}$ and all its partial derivatives of order 1 vanish on $x_1,\hdots,x_r$.
  For fixed $x\in\T^d$, the Taylor expansion in $x_0=\argmin_j |x-x_j|_{\infty}$ thus gives
  \begin{align*}
      1-p_{1,n}(x)&= \frac{1}{2} (x-x_0)^\top H_x(\xi) (x-x_0),\qquad H_x(\xi)=\left(\frac{-\partial^2 p_{1,n}}{\partial x_r\partial x_s}\left(\xi\right)\right)_{1\le r,s\le d}\\
      &\leq \|H_x(\xi)\|_{\mathrm{F}} \cdot \frac{d}{2} \cdot \min_j |x-x_j|_{\infty}^2 \\
      &\leq 2\pi^2d^2n^2\min_j |x-x_j|_{\infty}^2,
  \end{align*}
  where the last inequality uses an entrywise Bernstein inequality and $\|p_{1,n}\|_{\infty}=1$.
\end{remark}

\begin{figure}[htb]
    \includegraphics[width=0.47\columnwidth]{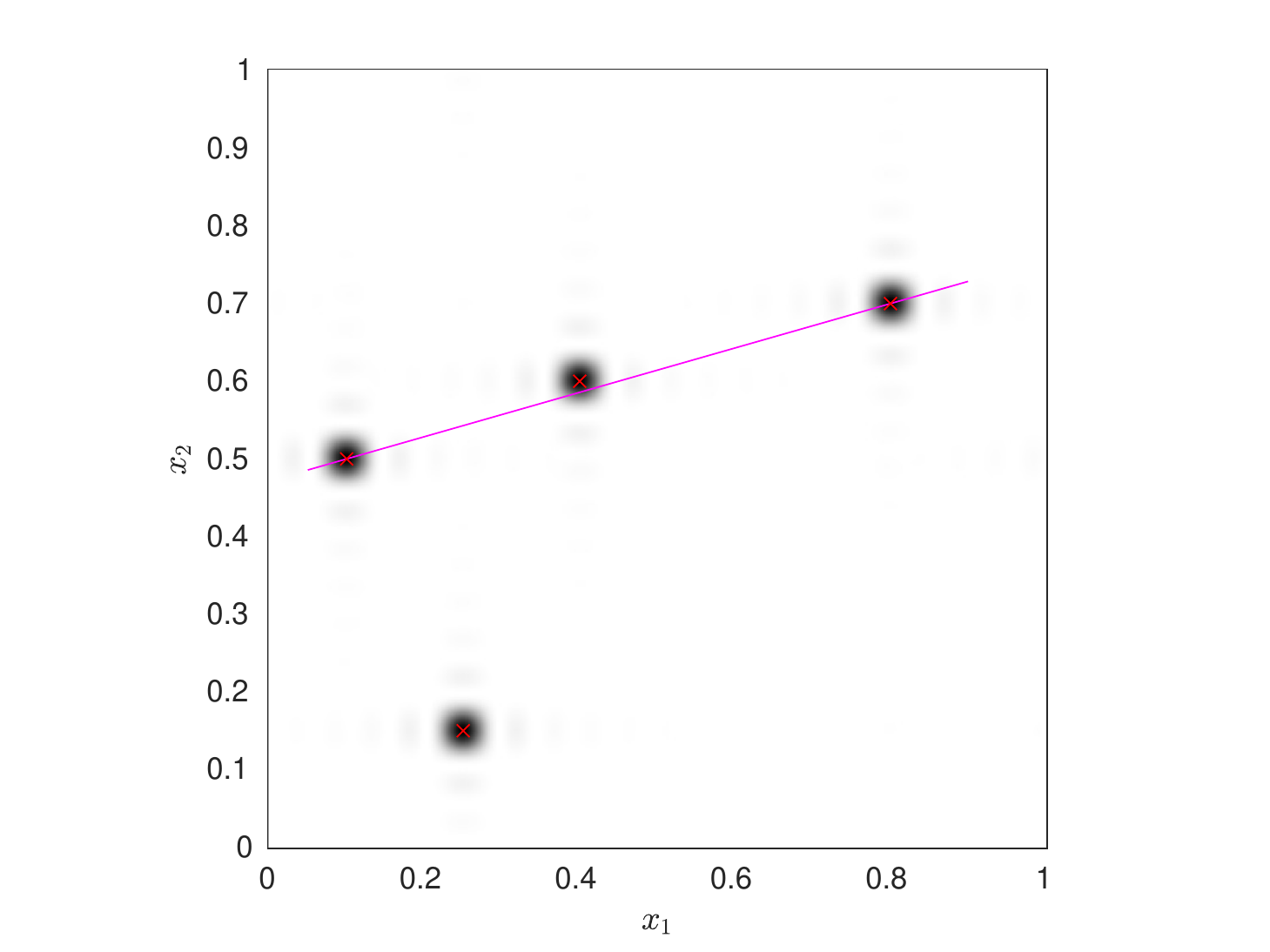}
    \includegraphics[width=0.52\columnwidth]{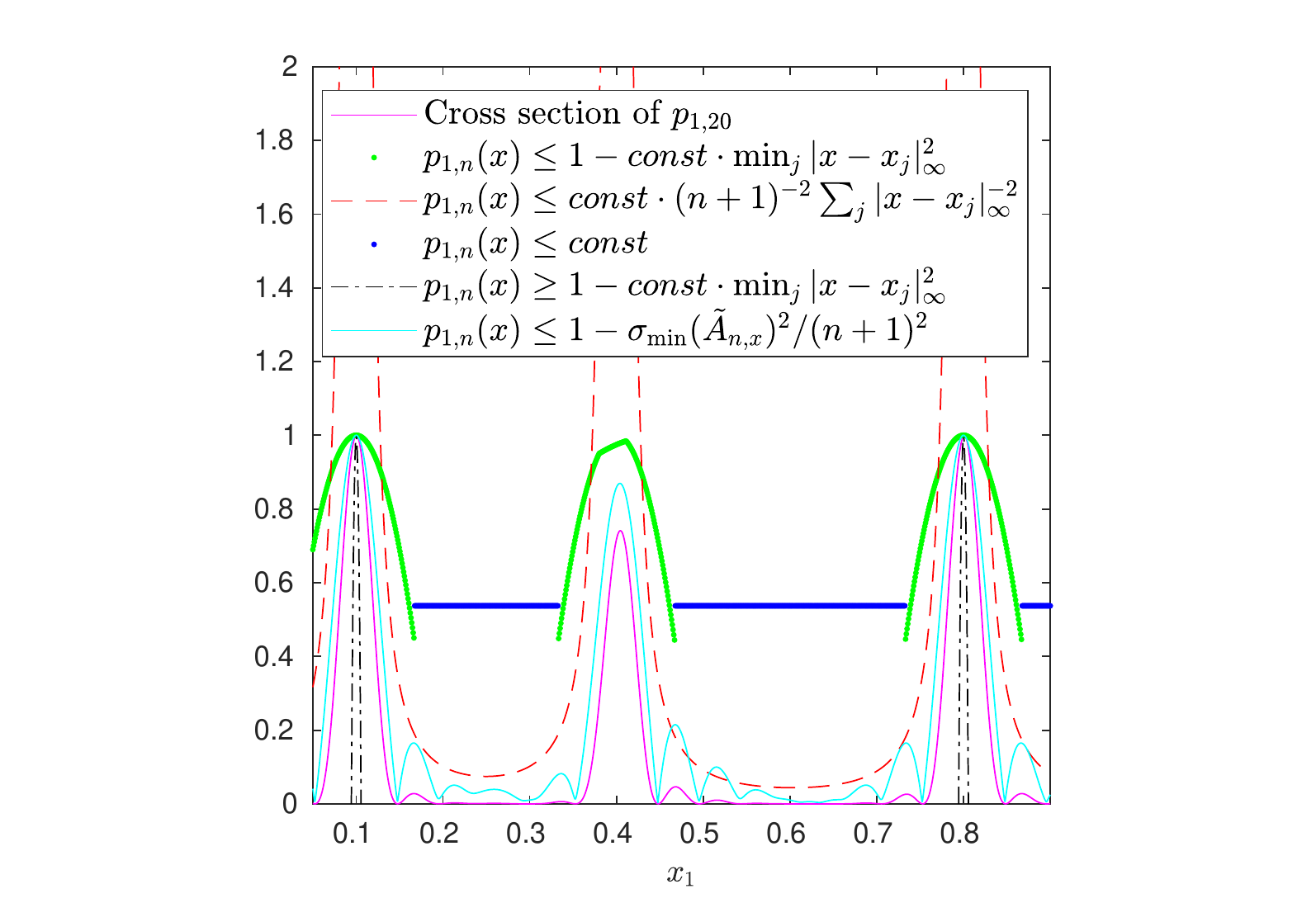}
    \caption{Summary of the bounds on $p_{1,n}$ from \cref{Thm_pointwise_conv} and \cref{remark_p1_upper_bound} for $d=2$, $n=20$, and a discrete measure $\mu$ supported on four points. The polynomial $p_{1,20}$ was evaluated on a grid in $\T^2$ and interpolated on the magenta cross section (left), while the bounds on $p_{1,20}$ on this cross section are displayed (right). We see that specifically the bound $1-\sigma_{\min}(\tilde{A}_{n,x})^2/N$ from the proof of \cref{Thm_pointwise_conv} reproduces the behaviour of $p_{1,n}$. The constant upper bound on $p_{1,n}$ away from the support of $\mu$ can be derived by using estimates for $\sigma_{\min}(\tilde{A}_{n,x})$ in the case of separated nodes.}
    \label{fig:Bounds_on_p1}
\end{figure}

\begin{lemma}[Convergence of singular values]\label{svalsconvergence}
Let $\mu=\sum_{j=1}^r \lambda_j \delta_{x_j}$ be a discrete complex measure whose weights are ordered non-increasingly with respect to their absolute value.
Assume that $(n+1)\min_{j\ne\ell}|x_j-x_{\ell}|_{\infty} > d$, then the singular values $\sigma_j$ of the moment matrix $T_n$ fulfil
\begin{align*}
    \left||\lambda_j|-\frac{\sigma_j}{N}\right|
    \leq \frac{1}{n+1}\cdot \frac{\abs{\lambda_{1}} \lr{1+\sqrt{\e}} r}{2\min_{j\ne\ell}|x_j-x_{\ell}|_{\infty}},\qquad
    j=1,\hdots,r.
\end{align*}
\end{lemma}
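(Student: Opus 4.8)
The plan is to turn this into a finite-dimensional perturbation estimate via the Vandermonde factorisation $T_n = A_n\adj \Lambda A_n$, $A_n = (\eip{kx_j})_{j=1,\dots,r;\,k\in[n]} \in \C^{r\times N}$. Under the separation hypothesis the $r$ rows of $A_n$ are linearly independent, so $G := A_n A_n\adj \in \C^{r\times r}$ is Hermitian positive definite and one may factor $A_n = G^{1/2} W$ with $W := G^{-1/2} A_n$ satisfying $W W\adj = I_r$. Then $T_n = W\adj (G^{1/2}\Lambda G^{1/2}) W$, and since $W$ has orthonormal rows a short computation (comparing $T_n\adj T_n = W\adj M\adj M W$ with $M\adj M$, where $M := G^{1/2}\Lambda G^{1/2}$, and using $W W\adj = I_r$) shows that the $r$ nonzero singular values of $T_n$ are exactly the singular values of the $r\times r$ matrix $M$. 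Because $\Lambda = \diag(\lambda_j)$ is diagonal with $|\lambda_1|\ge\dots\ge|\lambda_r|$, the reference values $N|\lambda_j|$ are just $\sigma_j(N\Lambda)$, so after normalising by $N=(n+1)^d$ and writing $\tilde G := G/N$ the task becomes bounding $|\sigma_j(\tilde G^{1/2}\Lambda\tilde G^{1/2}) - \sigma_j(\Lambda)|$.

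Next I would apply Weyl's perturbation inequality for singular values,
\[
 \Bigl| |\lambda_j| - \tfrac{\sigma_j}{N}\Bigr| \le \bigl\|\tilde G^{1/2}\Lambda\tilde G^{1/2} - \Lambda\bigr\|_2,
\]
and estimate the right-hand side elementarily. Writing $R := \tilde G^{1/2} - I_r$ one has $\tilde G^{1/2}\Lambda\tilde G^{1/2} - \Lambda = R\Lambda\tilde G^{1/2} + \Lambda R$, hence the norm is at most $\|\Lambda\|_2\,\|R\|_2\,(\|\tilde G^{1/2}\|_2 + 1) = |\lambda_1|\,\|R\|_2\,(\sqrt{\|\tilde G\|_2} + 1)$. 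Two scalar facts control the two factors: for the eigenvalues $\mu_i$ of $\tilde G$ one has $\|R\|_2 = \max_i|\sqrt{\mu_i} - 1| \le \max_i|\mu_i - 1| = \|\tilde G - I_r\|_2$, and $\sqrt{\|\tilde G\|_2} \le \sqrt{1 + \|\tilde G - I_r\|_2} \le e^{\|\tilde G - I_r\|_2/2}$. Thus, once we know $\eta := \|\tilde G - I_r\|_2 \le 1$, we obtain $|\,|\lambda_j| - \sigma_j/N\,| \le |\lambda_1|(1 + \sqrt e)\,\eta$, which already has the claimed shape.

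It remains to estimate $\eta = \|G/N - I_r\|_2$. The matrix $\tilde G - I_r$ has vanishing diagonal (since $G_{jj} = N$), and since $G_{j\ell} = \prod_{s=1}^d D_n^+(x_j^{(s)} - x_\ell^{(s)})$ with the one-sided Dirichlet kernel $D_n^+(t) = \sum_{k=0}^n \eip{kt}$ obeying $|D_n^+(t)| \le \min\{n+1,\; 1/(2\wrap{t})\}$, together with $\max_s \wrap{x_j^{(s)} - x_\ell^{(s)}} = |x_j - x_\ell|_\infty \ge q := \min_{j\ne\ell}|x_j - x_\ell|_\infty$, one gets $|G_{j\ell}| \le (n+1)^{d-1}/(2q)$ for $j\ne\ell$. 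Consequently $\eta \le \|\tilde G - I_r\|_\infty = \max_j \sum_{\ell\ne j} |G_{j\ell}|/N \le (r-1)/(2q(n+1))$, and the hypothesis $(n+1)q > d$ is what keeps $\eta$ below the threshold required above (for larger node counts one leans instead on the sharper conditioning estimates for Vandermonde matrices with separated nodes, cf.~\cite{KuNaSt22,HoKu21}); inserting this bound for the remaining factor $\eta$ in $|\lambda_1|(1+\sqrt e)\eta$ gives precisely $\frac{1}{n+1}\cdot\frac{|\lambda_1|(1+\sqrt e)\,r}{2q}$.

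The step I expect to be the main obstacle is the reduction to the $r\times r$ matrix: because $T_n$ is not Hermitian for complex weights, one must argue throughout with singular values rather than eigenvalues, and the identity $\sigma_j(T_n) = \sigma_j(G^{1/2}\Lambda G^{1/2})$ has to be justified carefully — it is the nonzero singular values, not merely the eigenvalues of $T_n\adj T_n$, that transfer through the factorisation $A_n = G^{1/2}W$. A secondary technical point is keeping the constant honest: the crude row-sum bound on $\eta$ only meets the threshold $\eta \le 1$ when $r$ is not too large compared with $q(n+1)$, so in full generality one should invoke the known Vandermonde conditioning bounds rather than a naive Gershgorin estimate.
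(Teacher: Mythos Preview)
Your argument is correct and is essentially the paper's proof: the polar decomposition $\tfrac{1}{\sqrt N}A_n\adj = PH$ used there has $H=\tilde G^{1/2}$ and $P=W\adj$, so both routes reduce to Weyl's inequality applied to $\|\tilde G^{1/2}\Lambda\tilde G^{1/2}-\Lambda\|_2\le|\lambda_1|\,(\|\tilde G^{1/2}\|+1)\,\|\tilde G^{1/2}-I_r\|_2\le|\lambda_1|(1+\sqrt{\e})\,\|\tilde G-I_r\|$. The only difference is that the paper obtains $\|\tilde G^{1/2}\|\le\sqrt{\e}$ directly from the Vandermonde conditioning bound $\smax(A_n)^2/N\le(1+1/d)^d\le\e$ under $(n+1)q>d$ (precisely the fallback you mention in your parenthetical), rather than via the detour $\eta\le 1$ --- which, as you rightly worry at the end, is \emph{not} implied by the separation hypothesis alone when $r$ is large.
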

\begin{proof}
  With the polar decomposition $\frac{1}{\sqrt{N}} A_n\adj = P H$,
  where $P\in\C^{N\times r}$ is unitary and $H\in\C^{r\times r}$ is positive-definite,
  we have that $\abs{\lambda_1}\ge\cdots\ge\abs{\lambda_r}$ are the singular values
  of the matrix $P \Lambda P^*$.
  Therefore, for the singular values of $T_n = A_n\adj \Lambda A_n$, we obtain
  \begin{align*}
    \max_{1\le j\le r} \abs{\frac{\sigma_j}{N} - \abs{\lambda_j}}
    &\le \norm[2]{\frac{1}{N} T_n - P \Lambda P^*}
    = \norm[2]{H \Lambda H^* - \Lambda}\\
    &\le \norm[2]{H \Lambda \lr{H - \idmat{r}}} + \norm[2]{\lr{H - \idmat{r}}\Lambda}\\
    &\le \abs{\lambda_1} \lr{\norm[2]{H} + 1} \norm[2]{H - \idmat{r}}\\
    &\le \abs{\lambda_1} \lr{\norm[2]{H} + 1} \norm[2]{(H+\idmat{r})^{-1}} \norm[2]{H^2 - \idmat{r}}\\
    &\le \abs{\lambda_1} \frac{\frac{1}{\sqrt{N}} \smax(A_n) + 1}{\frac{1}{\sqrt{N}} \smin(A_n) + 1} \normf{\frac{1}{N} A_n A_n^* - \idmat{r}},
  \end{align*}
  where the first inequality is due to \cite[Theorem~2.2.8]{bjorck2015}.
  Each entry of the matrix $\frac{1}{N} A_n A_n^* - \idmat{r}$ is a modified Dirichlet kernel and can be bounded uniformly by
  \begin{align*}
    \normf{\frac{1}{N}A_n^*A_n - \idmat{r}} &= \frac{1}{N} \left(\sum_{j=1}^r\sum_{l\neq j} \left|\sum_{k\in [n]}\eip{k(x_l-x_j)}\right|^2\right)^{1/2}
    \leq  \frac{r}{N} \cdot \frac{(n+1)^{d-1}}{2 \min_{j\ne\ell}|x_j-x_{\ell}|_{\infty}}.
  \end{align*}
  Moreover, since $(n+1)\min_{j\ne\ell}|x_j-x_{\ell}|_{\infty}> d$, it follows from \cite[Lemma~2.1]{KuNaSt22} that
  \[
    \frac{1}{\sqrt{N}} \smax(A_n)
    \le \sqrt{\lr{1 + \frac{1}{d}}^d}
    \le \sqrt{\e}.
    \qedhere
  \]
\end{proof}

\begin{thm}\label{thm:p1weak}
  Normalizing differently, we have
  \begin{equation*}
    \frac{p_{1,n}}{\|p_{1,n}\|_{L^1}} \rightharpoonup \tilde\mu=\frac{1}{r}\sum_{j=1}^r \delta_{x_j}
  \end{equation*}
  as $n\to\infty$.
\end{thm}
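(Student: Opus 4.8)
The plan is to identify $p_{1,n}$, up to an error that is negligible after the natural rescaling by $N\coloneqq(n+1)^d$, with a normalized superposition of Fejér kernels centred at the support points $x_1,\dots,x_r$, and then to use that $F_n$ is an approximate identity. Consider $M_n\coloneqq\tfrac1N A_nA_n\adj\in\C^{r\times r}$: its diagonal entries equal $1$, and its off-diagonal entries are $\tfrac1N D_n(x_j-x_\ell)$ with the Dirichlet-type kernel $D_n(y)\coloneqq\sum_{k\in[n]}\eip{ky}$, which — exactly as in the proof of \cref{svalsconvergence} — are bounded in modulus by $\tfrac{1}{2(n+1)\,\min_{j\ne\ell}|x_j-x_\ell|_{\infty}}$. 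Hence $\|M_n-\idmat{r}\|_2\to0$, so for all sufficiently large $n$ the matrix $M_n$ is invertible with $\|M_n^{-1}-\idmat{r}\|_2\to0$, the Vandermonde matrix $A_n$ has full row rank $r$, and $\rank T_n=r$; we henceforth restrict to such $n$.

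The next step is a geometric rewriting of $p_{1,n}$. Collecting the first $r$ left singular vectors of $T_n$ in $U_{n,1}$ gives $p_{1,n}(x)=\tfrac1N\|U_{n,1}\adj e_n(x)\|_2^2=\tfrac1N e_n(x)\adj P_n e_n(x)$, where $P_n$ is the orthogonal projection onto $\range T_n$. Since $T_n=A_n\adj\Lambda A_n$ with $\Lambda$ invertible and $A_n$ of full row rank, $\range T_n=\range A_n\adj=\sspan\{e_n(x_1),\dots,e_n(x_r)\}$, hence $P_n=A_n\adj(A_nA_n\adj)^{-1}A_n$. With $d(x)\coloneqq A_ne_n(x)=\lr{D_n(x_j-x)}_{j=1}^r$ this yields the closed form
\[
 p_{1,n}(x)=\tfrac1N\,d(x)\adj(A_nA_n\adj)^{-1}d(x)=\tfrac1{N^2}\,d(x)\adj M_n^{-1}d(x).
\]

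Now compare $p_{1,n}$ with the candidate density $q_n(x)\coloneqq\tfrac1{N^2}\|d(x)\|_2^2$. The identity $|D_n(y)|^2=N\,F_n(y)$ together with the evenness of $F_n$ gives $q_n(x)=\tfrac1N\sum_{j=1}^r F_n(x-x_j)$, while $p_{1,n}(x)-q_n(x)=\tfrac1{N^2}\,d(x)\adj\!\lr{M_n^{-1}-\idmat{r}}d(x)$. Integrating over $\T^d$ and using $\int_{\T^d}|D_n(x_j-x)|^2\diff x=\|D_n\|_{L^2}^2=N$, one obtains
\[
 N\,\|p_{1,n}-q_n\|_{L^1}\ \le\ r\,\bigl\|M_n^{-1}-\idmat{r}\bigr\|_2\ \xrightarrow{\ n\to\infty\ }\ 0 .
\]

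Finally, for any $f\in C(\T^d)$ we have $N\!\int_{\T^d}f\,q_n=\sum_{j=1}^r(F_n*f)(x_j)\to\sum_{j=1}^r f(x_j)$, since $F_n$ is an approximate identity and $f$ is uniformly continuous; combined with the last display this gives $N\!\int_{\T^d}f\,p_{1,n}\to\sum_{j=1}^r f(x_j)$, and the choice $f\equiv1$ yields $N\|p_{1,n}\|_{L^1}\to r$ (using $p_{1,n}\ge0$). Dividing the two limits gives $\int_{\T^d}f\,p_{1,n}/\|p_{1,n}\|_{L^1}\to\tfrac1r\sum_{j=1}^r f(x_j)=\int_{\T^d}f\diff\tilde\mu$, which is exactly the asserted weak convergence. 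The key insight is the closed form $p_{1,n}=\tfrac1{N^2}d\adj M_n^{-1}d$; once it is available the remainder is routine, and the only points needing care are the bookkeeping of the factor $N$ — since $\int f\,p_{1,n}$ and $\|p_{1,n}\|_{L^1}$ both vanish like $N^{-1}$, only their ratio carries information — and verifying that $M_n^{-1}$ exists and stays bounded for large $n$, which is what makes the two displayed inequalities legitimate.
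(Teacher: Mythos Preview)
Your proof is correct and follows the same strategy as the paper: compare $p_{1,n}$ with the Fej\'er sum $\frac{1}{N}\sum_{j}F_n(\cdot-x_j)=F_n*\tilde\mu\cdot\frac{r}{N}$ and show the $L^1$-difference vanishes after rescaling by $N$, then invoke the approximate-identity property of $F_n$.

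The technical execution differs slightly. The paper first records the exact identity $\|p_{1,n}\|_{L^1}=r/N$ (which follows immediately from orthonormality of the $u_j^{(n)}$), then writes both $p_{1,n}$ and $\tilde p_n\coloneqq F_n*\tilde\mu$ via the SVD of the moment matrix of $\tilde\mu$ and controls their difference by $\|\tfrac{1}{r}\idmat{r}-\tfrac{1}{N}\tilde\Sigma\|_2$, appealing to \cref{svalsconvergence}. You instead use the closed projection formula $P_n=A_n\adj(A_nA_n\adj)^{-1}A_n$ directly, which reduces the estimate to the elementary bound $\|M_n^{-1}-\idmat{r}\|_2\to 0$ without passing through the polar decomposition of \cref{svalsconvergence}; you also recover $N\|p_{1,n}\|_{L^1}\to r$ only in the limit rather than exactly. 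Your route is somewhat more self-contained, while the paper's route reuses its existing singular-value machinery; the underlying linear algebra (well-conditioning of $\tfrac{1}{N}A_nA_n\adj$) is the same in both.
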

\begin{proof}
  First note that $\|p_{1,n}\|_{L^1}=r/N$.
  We define $\tilde p_n=F_n*\tilde\mu$ and observe that
  for any continuous function $f$ on $\T^d$
  we have
  \begin{align*}
    &\mathrel{\hphantom{\le}}\abs{\int_{\T^d} \frac{p_{1,n}(x)}{\|p_{1,n}\|_{L^1}} f(x) \d x - \frac{1}{r}\sum_{j=1}^r f(x_j)}\\
    &\le
    \abs{\int_{\T^d} \lr{\frac{p_{1,n}(x)}{\|p_{1,n}\|_{L^1}} - \tilde p_n(x)} f(x) \d x}
    + \abs{\int_{\T^d} \tilde p_n(x)f(x)\d x - \frac{1}{r}\sum_{j=1}^r f(x_j)} \\
    &\le
    \normlonetm{\frac{N}{r}p_{1,n} - \tilde p_n} \normlinftm{f}
    + \abs{\int_{\T^d} f\d(F_n*\tilde\mu) - \int_{\T^d} f\d\tilde\mu},
  \end{align*}
  so, by \cref{thm:W1p},
  it is enough to show that $\normlonetm{\frac{N}{r} p_{1,n} - \tilde p_n}$ converges to zero for $n\to\infty$.

  If $n$ is sufficiently large, then by \eqref{eq:pnuj} we can write
  $\tilde p_n(x)
  = \frac{1}{N} e_n(x)\adj \tilde U \tilde\Sigma \tilde U\adj e_n(x)$
  where $\tilde\Sigma\in\C^{r\times r}$ denotes the diagonal matrix
  consisting of non-zero singular values
  and $\tilde U \in \C^{N\times r}$ denotes the corresponding singular vector matrix
  of the moment matrix of $\tilde\mu$.

  As $p_{1,n}$ only depends on the signal space of the moment matrix $T_n$ of $\mu$,
  which agrees with the signal space of the moment matrix of $\tilde\mu$,
  it follows by \eqref{eq:p1} that
  $p_{1,n}(x)
  = \frac{1}{N} e_n(x)\adj \tilde U \tilde U\adj e_n(x)$
  and thus
  \[
    \abs{\frac{N}{r} p_{1,n}(x) - \tilde p_n(x)}
    = \abs{e_n(x)\adj \tilde U \lr{\frac{\idmat{r}}{r} - \frac{\tilde\Sigma}{N}} \tilde U\adj e_n(x)}
    \le \norm[2]{e_n(x) \tilde U}^2 \norm[2]{\frac{1}{r}\idmat{r} - \frac{1}{N}\tilde\Sigma}.
  \]
  Since $\int_{\T^d} \norm[2]{e_n(x) \tilde U}^2 \d x = N \|p_{1,n}\|_{L^1} = r$ is constant,
  the result follows from \cref{svalsconvergence}.
\end{proof}

\subsection{Positive-dimensional situation}
For a measure $\mu$ whose support is an algebraic variety, we derive a pointwise convergence rate $p_{1,n}(x)=\mathcal{O}\left(n^{-1}\right)$ outside of the variety in \cref{thm:pointw_pos} and this proves \eqref{eq:chiV}. It is not clear whether this is already optimal, as we found $\mathcal{O}\left(n^{-2}\right)$ as an approximation rate in the case of a discrete measure. 
\begin{thm} \label{thm:pointw_pos}
  Let $y\in \T^d$ and
  let $g \in\langle \eip{\scalarprod{k}{x}} \mid k\in [m]\rangle$ be a trigonometric polynomial of max-degree $m$
  such that $g(y)\ne 0$ and $g$ vanishes on $\supp\mu$.
  Then
  \[
    p_{1,n+m}(y)
    \le 1 - \frac{(n+1)^d}{(n+m +1)^d} \cdot \frac{\abs{g\lr{y}}^2}{\lr{F_n * \abs{g}^2}(y)}
    \le \frac{\|g\|_{L^2}^2}{|g(y)|^2} \frac{m(4m+2)^d}{n+1} + \frac{d m}{\lr{n+m + 1}},
  \]
  for $n\in\N$, $n\geq m$.
\end{thm}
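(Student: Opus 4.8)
The plan is to feed a tailored test polynomial into the variational characterization \eqref{eq:var:p0} of $p_{0,n}$, applied to the moment matrix $T_{n+m}$. First note that, since $g$ vanishes on $\supp\mu$, the measure $g\,\diff\mu$ is zero, so for every $k\in[n+m]$ we have $(T_{n+m}g)_k = \int_{\T^d}\eim{\scalarprod{k}{y}}g(y)\,\diff\mu(y) = 0$, i.e.\ $g\in\ker T_{n+m}$; as $g\ne0$, its zero locus is proper, hence $V(\ker T_{n+m})\subsetneq\T^d$ and \eqref{eq:var:p0} is available. Next write the Fejér kernel as $F_n(x) = (n+1)^{-d}\abs{K(x)}^2$ with $K(x) = \prod_{s=1}^d\sum_{j=0}^n\eip{jx_s}$, a polynomial with frequencies in $[n]$ and $K(0)=(n+1)^d$. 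For fixed $y$ put $p(x)\coloneqq g(x)\,K(x-y)$; then $p$ has frequencies in $[n+m]$, vanishes on $\supp\mu$, and $p(y)=g(y)(n+1)^d\ne0$, so $p$ is an admissible competitor in \eqref{eq:var:p0}. Since $\abs{K(x-y)}^2 = (n+1)^d F_n(x-y)$, we get $\|p\|_{L^2}^2 = (n+1)^d (F_n*\abs{g}^2)(y)$ and $\abs{p(y)}^2 = (n+1)^{2d}\abs{g(y)}^2$, whence with $N=(n+m+1)^d$
\begin{align*}
 p_{0,n+m}(y) \ge \frac{\abs{p(y)}^2}{N\|p\|_{L^2}^2} = \frac{(n+1)^d}{(n+m+1)^d}\cdot\frac{\abs{g(y)}^2}{(F_n*\abs{g}^2)(y)}.
\end{align*}
Because $p_{1,n+m}(y)+p_{0,n+m}(y)=1$ by \eqref{eq:p1p0sos}, this is exactly the first asserted inequality.

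For the second inequality I would split
\begin{align*}
 1 - \frac{(n+1)^d}{(n+m+1)^d}\cdot\frac{\abs{g(y)}^2}{(F_n*\abs{g}^2)(y)}
 = \Bigl(1 - \frac{(n+1)^d}{(n+m+1)^d}\Bigr) + \frac{(n+1)^d}{(n+m+1)^d}\Bigl(1 - \frac{\abs{g(y)}^2}{(F_n*\abs{g}^2)(y)}\Bigr)
\end{align*}
and handle the two pieces separately. For the first, Bernoulli's inequality gives $\frac{(n+1)^d}{(n+m+1)^d} = \bigl(1 - \frac{m}{n+m+1}\bigr)^d \ge 1 - \frac{dm}{n+m+1}$, so it is at most $\frac{dm}{n+m+1}$, the second term of the claimed bound. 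For the second piece, bound the prefactor by $1$; if $(F_n*\abs{g}^2)(y) < \abs{g(y)}^2$ the term is negative and there is nothing to prove, otherwise it is at most $\bigl((F_n*\abs{g}^2)(y) - \abs{g(y)}^2\bigr)/\abs{g(y)}^2$. Thus everything reduces to showing $(F_n*\abs{g}^2)(y) - \abs{g(y)}^2 \le (m+1)^d\,\frac{dm}{n+1}\,\|g\|_{L^2}^2$ and then noting $dm(m+1)^d \le 2^d m(2m+1)^d = m(4m+2)^d$, which holds since $d\le 2^d$ and $m+1\le 2m+1$.

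To estimate that difference, set $h\coloneqq\abs{g}^2$, a trigonometric polynomial with frequencies in $\{-m,\dots,m\}^d$. Since $m\le n$, on these frequencies $0\le\widehat{F_n}(k) = \prod_s\bigl(1 - \frac{\abs{k_s}}{n+1}\bigr)\le1$ and $1 - \widehat{F_n}(k)\le\frac{\|k\|_1}{n+1}\le\frac{dm}{n+1}$, so
\begin{align*}
 (F_n*h)(y) - h(y) = \sum_{\|k\|_\infty\le m}\bigl(\widehat{F_n}(k) - 1\bigr)\hat h(k)\,\eip{\scalarprod{k}{y}} \le \frac{dm}{n+1}\sum_{\|k\|_\infty\le m}\abs{\hat h(k)}.
\end{align*}
Finally $\hat h = \hat g * \widehat{\overline g}$, hence $\sum_k\abs{\hat h(k)}\le\|\hat g\|_{\ell^1}\,\|\widehat{\overline g}\|_{\ell^1} = \|\hat g\|_{\ell^1}^2 \le (m+1)^d\,\|\hat g\|_{\ell^2}^2 = (m+1)^d\,\|g\|_{L^2}^2$, using $\|\widehat{\overline g}\|_{\ell^1} = \|\hat g\|_{\ell^1}$, Cauchy--Schwarz over the $(m+1)^d$ frequencies in $[m]$, and Parseval. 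Assembling the pieces yields the claim.

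The only real obstacle is this last step: one has to dominate $\sum_{\|k\|_\infty\le m}\bigl|\widehat{\abs{g}^2}(k)\bigr|$ by $\|g\|_{L^2}^2$ up to a factor polynomial in $m$, and the efficient route is through $\|\hat g\|_{\ell^1}^2$ and Cauchy--Schwarz, rather than through a Nikolskii-type comparison of $\|g\|_{L^4}$ with $\|g\|_{L^2}$. Everything else is bookkeeping, once one observes that $g(\cdot)\,K(\cdot - y)$ is an admissible competitor in \eqref{eq:var:p0}.
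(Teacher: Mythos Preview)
Your proof is correct and follows essentially the same route as the paper: the same test polynomial $p(x)=g(x)\,K(x-y)$ (the paper writes $K(x-y)=e_n(x)^*e_n(y)$) plugged into the variational formula \eqref{eq:var:p0} gives the first inequality, and the second is obtained by the same algebraic splitting together with a Fourier-side bound on $(F_n*|g|^2)(y)-|g(y)|^2$. The only notable difference is in that last estimate: the paper bounds each $|\widehat{|g|^2}(k)|$ by $\|g\|_{L^2}^2$ and counts the $(2m+1)^d$ frequencies, whereas you go through $\|\widehat{|g|^2}\|_{\ell^1}\le\|\hat g\|_{\ell^1}^2\le (m+1)^d\|g\|_{L^2}^2$ via Young and Cauchy--Schwarz, which is in fact a slightly sharper intermediate constant that you then relax to match the stated bound.
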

\begin{proof}
  Set $N_n = (n+1)^d$ for $n\in\N$
  and define the trigonometric polynomial
  $p(x) = e_{n,y}(x) g(x)$ of max-degree $n+m$,
  where $e_{n,y}(x) \coloneqq e_n(x)\adj e_n(y)$.
  Furthermore, we define $f(x)\coloneqq \abs{g(x)}^2$.
  Then
  \[
    \abs{p(x)}^2 = N_n F_n(x-y) f(x),
  \]
  for all $x\in\T^d$.
  On the other hand,
  \[
    \normltm{p}^2 = N_n \lr{F_n * f}(y).
  \]
  Thus, by \eqref{eq:var:p0}, we obtain
  \begin{equation}\label{eq:p1bound}
    1 - p_{1,n+m}(y)
    \ge \frac{\abs{p(y)}^2}{N_{n+m} \normltm{p}^2}
    = \frac{N_n}{N_{n+m}} \cdot \frac{f(y)}{\lr{F_n * f}(y)}
    \ge \lr{1 - \frac{d m}{n+m + 1}} \frac{1}{1 + h_n},
  \end{equation}
  where we define
  $h_n \coloneqq {\normlinftm{F_n * f - f}}/{f(y)}$,
  which proves the first statement.
  For the upper bound, we compute 
  \begin{align*}
    \abs{(F_n * f - f)(x)}
    &= \abs{\sum_{k\in\{-m,\dots,m\}^d} \sum_{\genfrac{}{}{0pt}{}{s\in\{0,1\}^d}{1\leq |s|\leq d}} \frac{(-1)^{|s|} |k^s|}{(n+1)^{|s|}} \hat{f}_k \eip{kx}} \\
    &= \abs{\sum_{k\in\{-m,\dots,m\}^d} \sum_{\genfrac{}{}{0pt}{}{s\in\{0,1\}^d}{1\leq |s|\leq d}} \frac{(-1)^{|s|}|k^s|}{(n+1)^{|s|}} \int_{\T^d} \abs{g(z)}^2 \eip{k(x-z)} \diff z} \\
    &\le \sum_{\genfrac{}{}{0pt}{}{s\in\{0,1\}^d}{1\leq |s|\leq d}} \left(\frac{m}{n+1}\right)^{|s|} (2m+1)^d \|g\|_{L^2}^2 \\
    &\leq  \|g\|_{L^2}^2 \frac{m(4m+2)^d}{n+1} 
  \end{align*}
  by using that $f=|g|^2$ is a trigonometric polynomial of degree $m$.
  Then it follows from \eqref{eq:p1bound} that
  \[
    p_{1,n+m}(y)
    \le h_n + \frac{d m}{\lr{n+m + 1}}
    \le \frac{\|g\|_{L^2}^2}{|g(y)|^2} \frac{m(4m+2)^d}{n+1} + \frac{d m}{\lr{n+m + 1}},
  \]
  since we can apply $(1+h_n)^{-1}\geq 1-h_n$.
\end{proof}

\section{Numerical examples}\label{sec:num}
We illustrate in this section the asymptotic behaviour of $p_n$ and $p_{1,n}$ for several types of singular measures, with respect to the Wasserstein-1 distance. We compute the distance using a semidiscrete optimal transport algorithm, described below. 

Our experiments focus on three examples on $\T^2$: a discrete measure $\mu_{\mathrm{d}}$ supported on $15$ points, with (nonnegative) random amplitudes, a uniform measure $\mu_{\mathrm{cu}}$ supported on the trigonometric algebraic curve
\begin{equation}\label{eq:implicit-curve}
    \cos(2\pi x)\cos(2\pi y) + \cos(2\pi x) + \cos(2\pi y) = \frac{1}{4},
\end{equation}
and a uniform measure $\mu_{\mathrm{ci}}$ supported on the circle centered in $c_0=(\frac{1}{2},\frac{1}{2})$ with radius $r_0 = 0.3$.

The moments of $\mu_{\mathrm{cu}}$ are computed numerically up to machine precision using Arb \cite{Johansson2017arb} with a parametrization of the implicit curve \eqref{eq:implicit-curve}. It follows from \eqref{eq:circlemoments} that the trigonometric moments of the measure $\mu_{\mathrm{ci}}$ are given by
\begin{equation*}
    \widehat{\mu_{\mathrm{ci}}}(k) = \eim{k c_0} J_0(2\pi r_0 \norm{k}_2).
\end{equation*}
The polynomials $p_n$, $J_n*\mu$, and $p_{1,n}$ can be evaluated efficiently via the fast Fourier transform over a regular grid in $\T^2$.
For the polynomial $p_{1,n}$, the singular value decomposition of the moment matrix $T_n$ can be computed at reduced cost by exploiting that $T_n$ has Toeplitz structure and resorting only to matrix-vector multiplications.

To compute transport distances to the measure $\mu\in\{\mu_{\mathrm{cu}},\mu_{\mathrm{ci}}\}$, let the curve $C=\supp\mu\subset \T^d$ denote its support with arc-length $L$. 
Now let $s\in\N$, take a partition $C=\bigcup_{\ell=1}^s C_\ell$ into path-connected curves with measure $\mu(C_\ell)=s^{-1}$ and arc-length $L_\ell$, and any $x_\ell\in C_\ell$, then
\begin{align*}
    W_1\left(\frac{1}{s}\sum_{\ell=1}^s \delta_{x_\ell}, \mu \right)
    &=\sup_{f: \Lip(f)\leq 1} \left|\sum_\ell \int_{C_\ell} \left[f(x)-f(x_\ell)\right] \diff \mu(x) \right|\\
    &\leq \sum_{\ell=1}^s \int_{C_\ell} |x-x_\ell|_1 \diff \mu(x) 
    \leq \sum_{\ell=1}^s \sqrt{d} L_\ell \mu(C_\ell) 
    =\frac{\sqrt{d}\cdot L}{s}.
\end{align*}
We denote the resulting discrete measures by $\mu_{\mathrm{cu}}^s$ and $\mu_{\mathrm{ci}}^s$, respectively (see Figure~\ref{fig:singular}). 
In our tests, we use $s = 3000$ samples, which offers a satisfactory tradeoff between computational time and accuracy for our range of degrees $n$. Indeed, the computational cost of evaluating the objective \eqref{eq:semidiscrete-objective} or its gradient grows linearly in $s$, while for degrees up to $n=250$, sampling beyond 3000 points has no effect on the output of our algorithm for computing $W_1(p_n,\mu^s)$, see Figure~\ref{fig:singular}.

\begin{figure}
    \captionsetup[subfloat]{labelformat=empty}
    \centering
    \subfloat[Algebraic curve]{\includegraphics[width=0.3\textwidth]{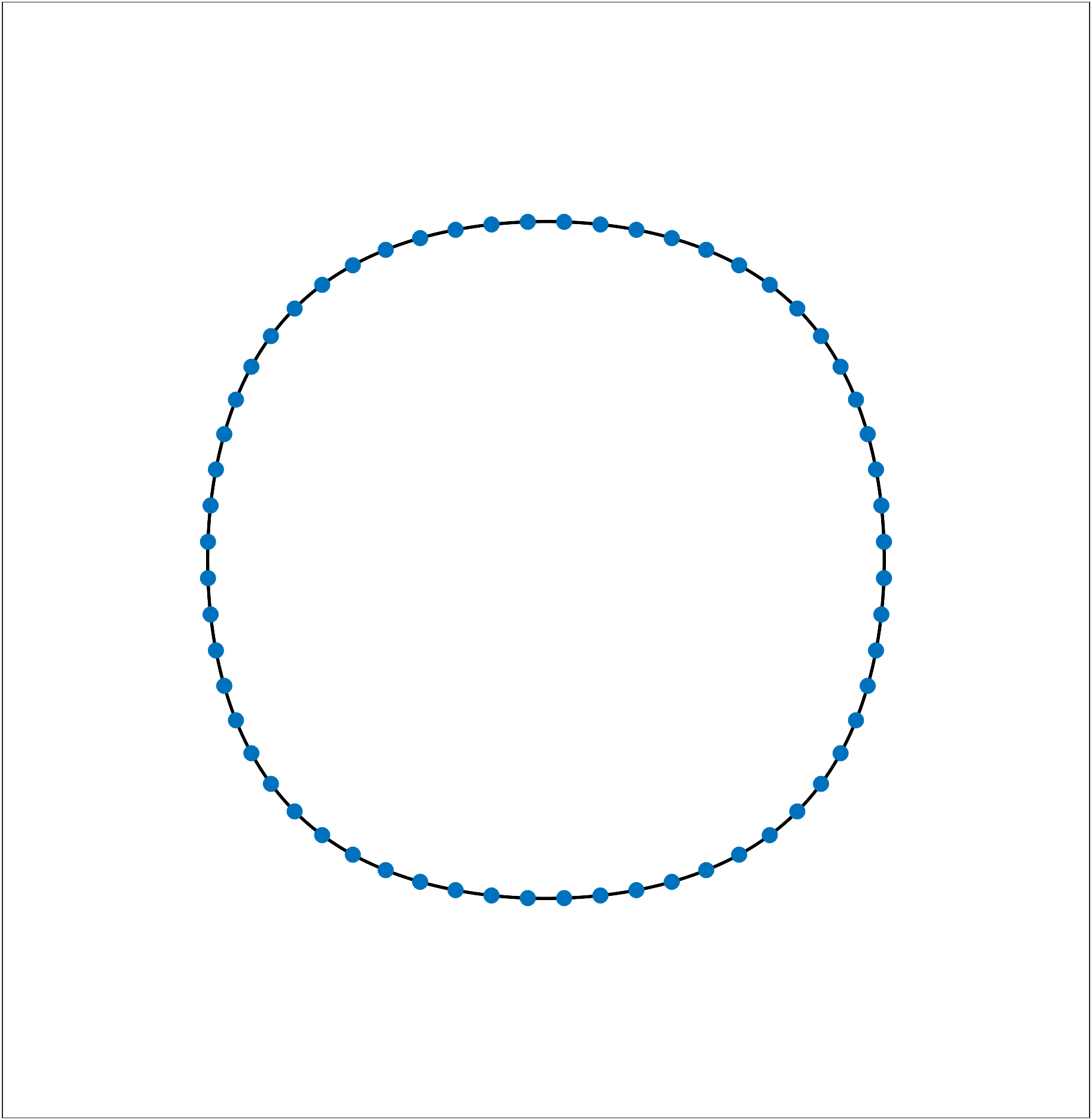}}
    \hspace{1mm}
    \subfloat[Circle]{\includegraphics[width=0.3\textwidth]{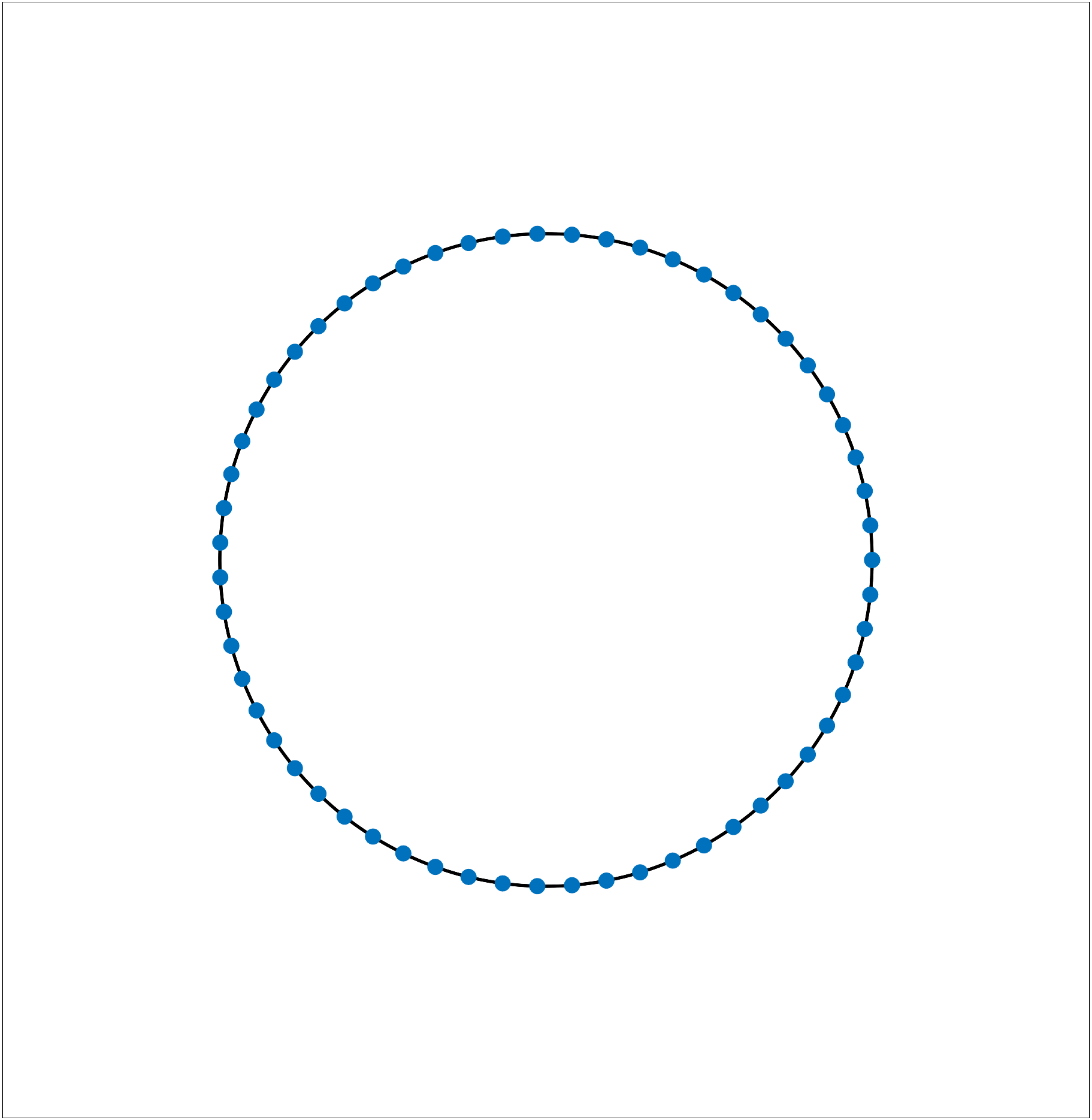}}
    \subfloat[Effect of sampling]{
        \includegraphics[width=0.3\textwidth,height=0.306\textwidth]{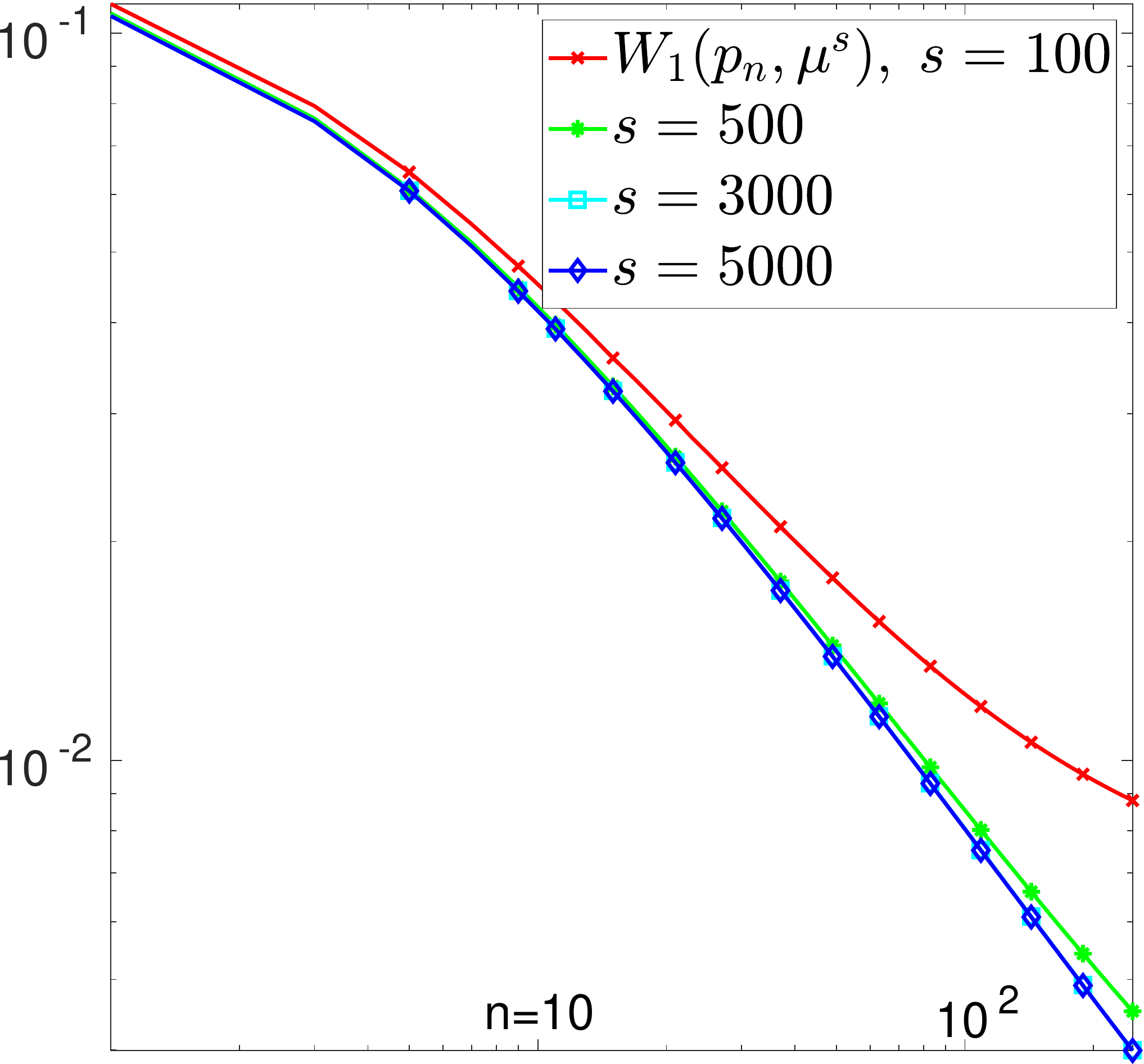}
    }
    \caption{The two example measures $\mu_{\mathrm{cu}}^s$ (left) and $\mu_{\mathrm{ci}}^s$ (middle) used in our numerical tests. In this display the two continuous measures are discretized using $s=60$ samples. The amplitudes of the spikes in both measures are taken equal, and normalized. The last plot shows the Wasserstein distance $W_1(F_n * \mu_{\mathrm{cu}},\mu_{\mathrm{cu}}^s)$ for degrees $n=1,\ldots,250$ and several values of $s$. }
    \label{fig:singular}
\end{figure}


Now let $\mu = \sum_{j=1}^s \la_j\de_{x_j}$ refer to either $\mu_{\mathrm{d}}$, $\mu_{\mathrm{cu}}^s$ or $\mu_{\mathrm{ci}}^s$. The semidiscrete optimal transport between a measure with density $p$ and the discrete measure $\mu$ may be computed by solving the finite-dimensional optimization problem
\begin{equation}
    \label{eq:semidiscrete-objective}
    \max_{w\in\R_+^s} f(w),\qquad f(w)=\sum_{j=1}^s \la_j w_j + \sum_{j=1}^s \int_{\Omega_j(w)} (\abs{x_j-y}-w_j)p(y)\diff y
\end{equation}
where the Laguerre cells associated to the weight vector $w$ are given by
\begin{equation*}
    \Omega_j(w) = \setcond{y\in\T^d}{\abs{x_j-y} - w_j \leq \abs{x_i-y}-w_i,\;i=1,\hdots,s},
\end{equation*}
see e.g.~\cite{Peyre_19}.
In our implementation, the density measure (and the Laguerre cells) are computed over a $502\times 502$ grid. We use a BFGS algorithm to perform the maximization, using the Matlab implementation \cite{Schmidt05}; we stop the iterations when the change of value of the objective goes below $10^{-9}$, or when the infinite norm $\norm{\nabla f}_\infty$ goes below $10^{-5}$. Note that this last condition has a geometrical interpretation since the $j$-th component of $\nabla f$ corresponds to the difference between the measure of the Laguerre cell $\Omega_j(w)$ and the amplitude $\la_j$. We set the limit number of iterations to 100. 

\begin{figure}[htb]
    \centering
    \captionsetup[subfloat]{labelformat=empty}
    \subfloat[Discrete]{\includegraphics[width=0.33\textwidth]{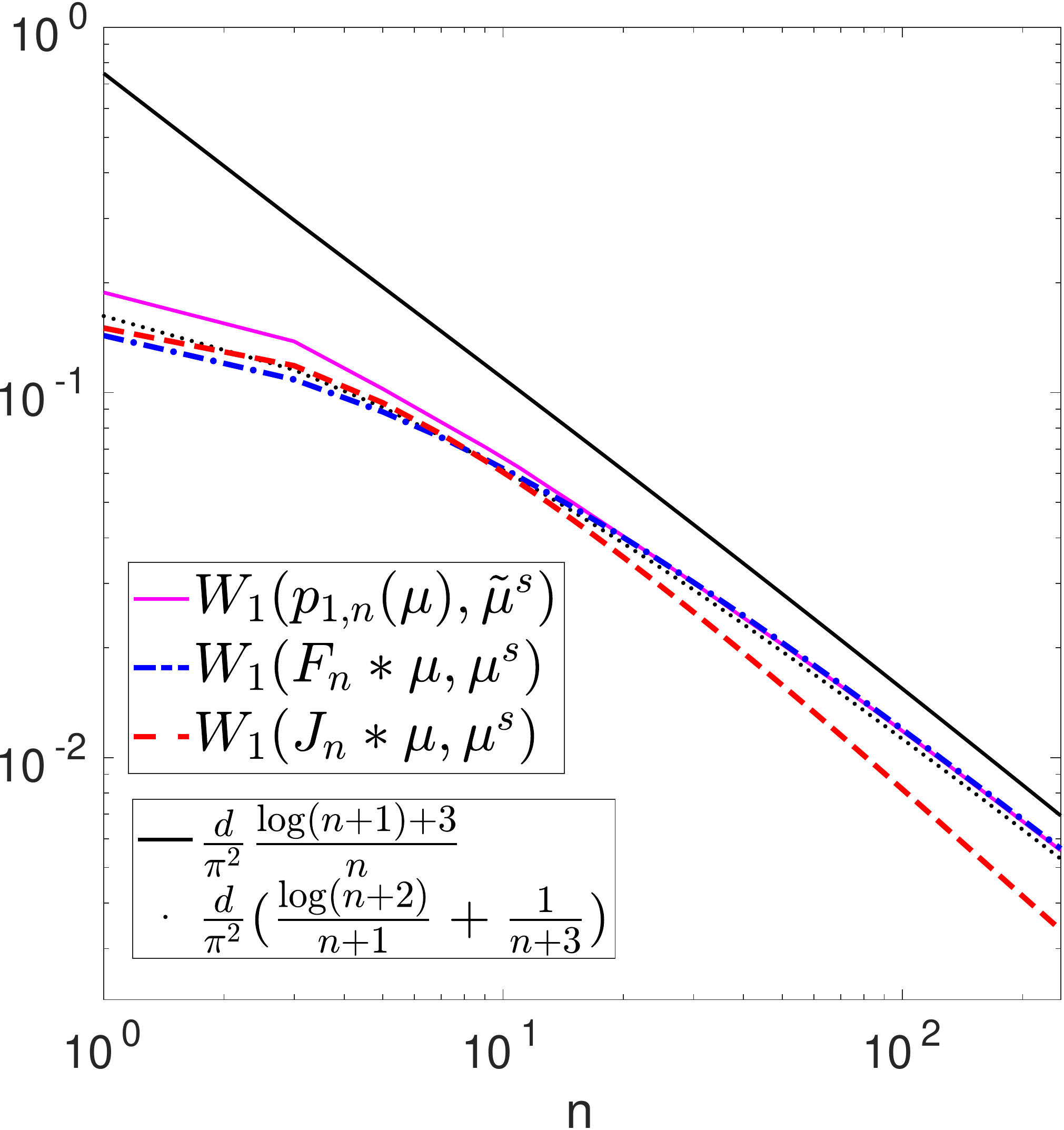}}
    \subfloat[Algebraic curve]{\includegraphics[width=0.33\textwidth]{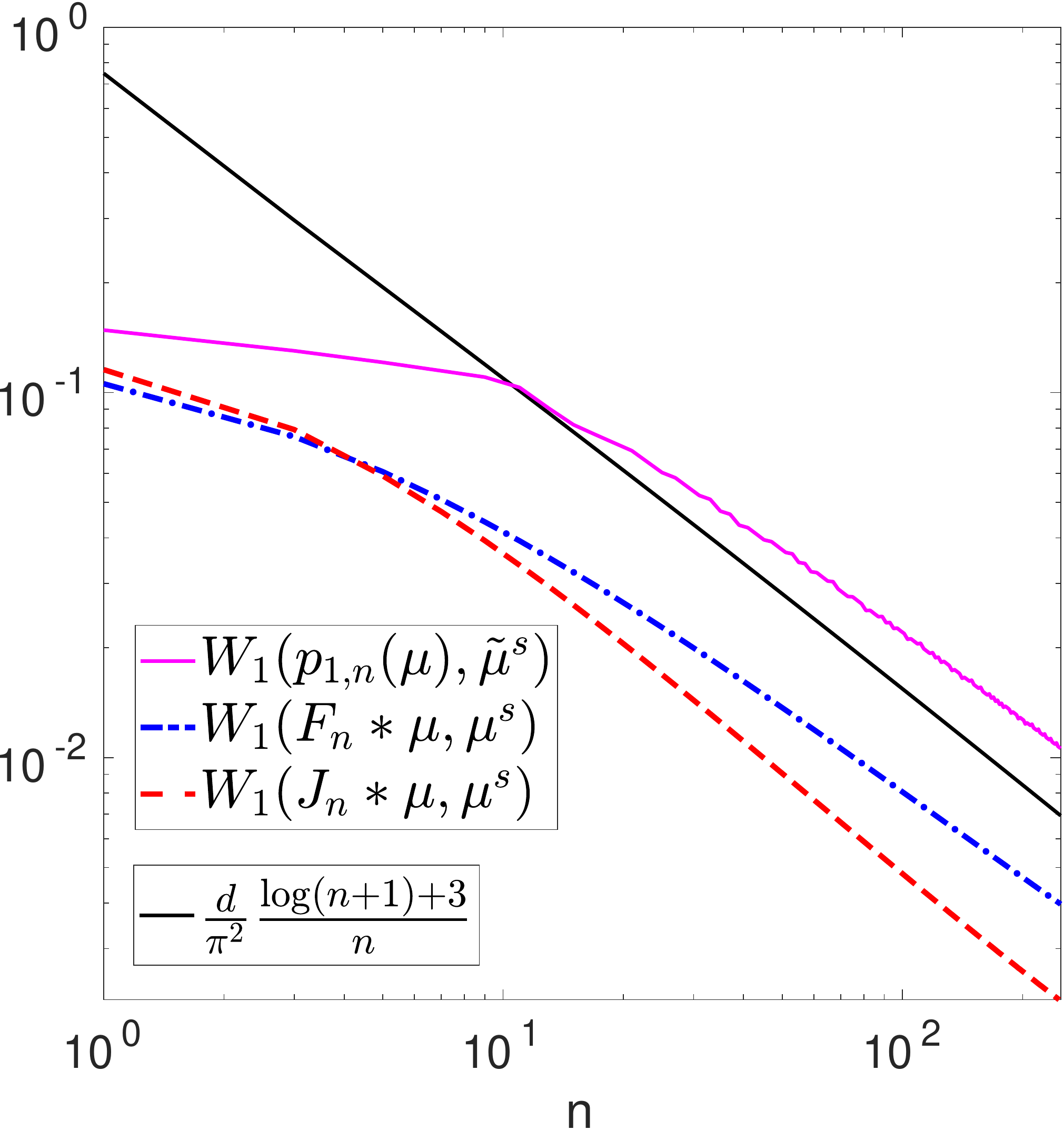}}
    \subfloat[Circle]{\includegraphics[width=0.33\textwidth]{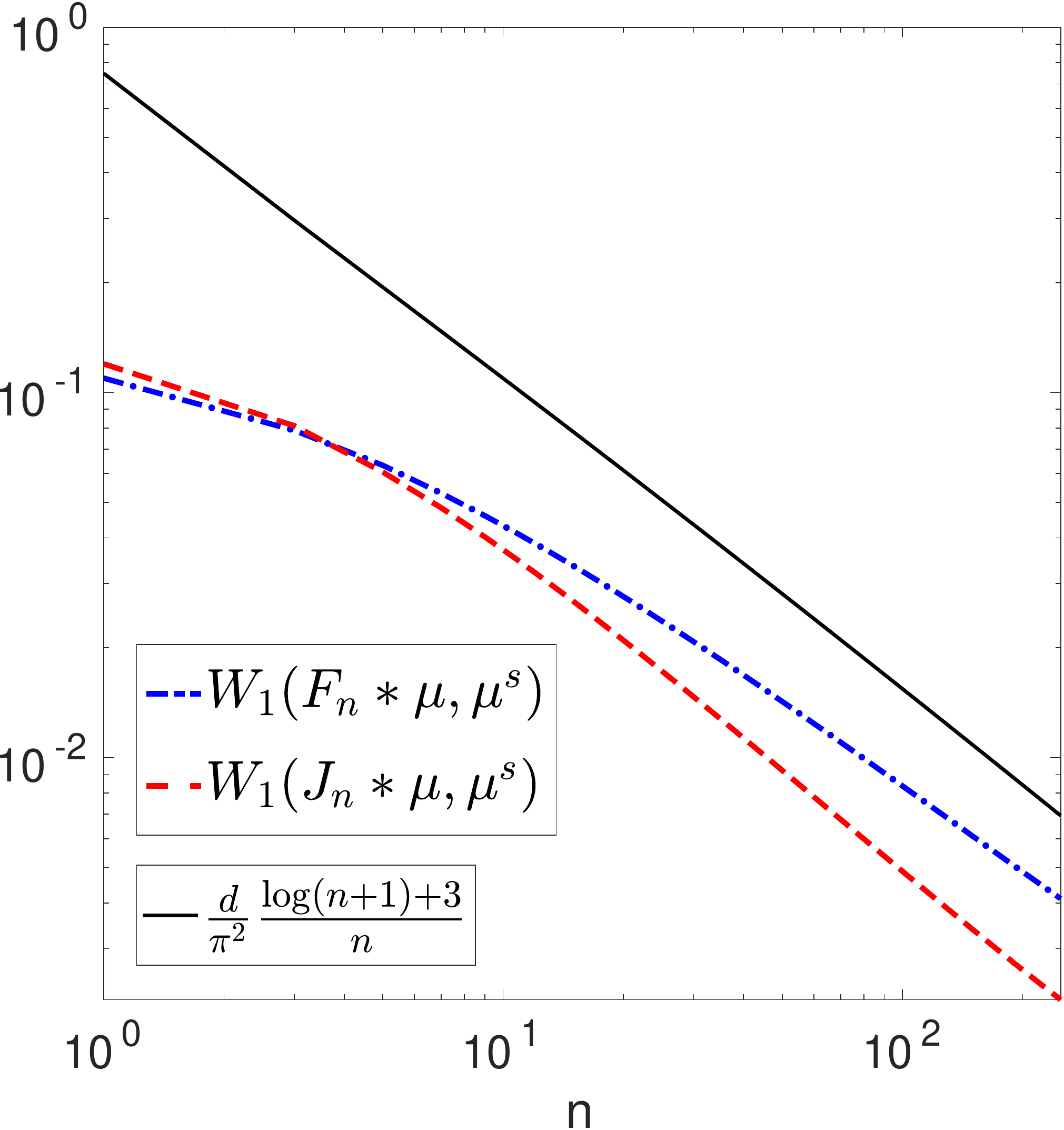}}
    \caption{Asymptotics of $p_n$ and $p_{1,n}$. For $p_{1,n}$, the distance is computed with respect to the unweighted measure $\tilde{\mu}^s$, that is $\tilde{\mu}^s = \frac{1}{s}\sum_{i=1}^s\de_{x_i}$ where $\{x_1,\dots,x_s\}$ is the support of $\mu$.}
    \label{fig:my_label}
\end{figure}

In the discrete case, our numerical results show that the Wasserstein distance $W_1(p_n,\mu^s)$ decreases at a rate close to the worst-case bound derived in Theorem~\ref{thm:W1p}. This is also the case for $W_1(p_{1,n},\tilde{\mu}^s)$, which is coherent with the bound given in the proof of Theorem~\ref{thm:p1weak}. In the positive dimensional cases, one would need to compute the Wasserstein distances for degrees larger than $n=250$ to be able to reliably estimate a rate, but this would require better optimized algorithms, in the spirit for instance of \cite{lakshmanan22}, which goes beyond the scope of this paper. Still, our preliminary results seem to indicate that the rates for $F_n * \mu$ and $J_n * \mu$ in the positive dimensional situation are similar to the ones for discrete measures, but with better constants. For $p_{1,n}$ on the other hand, although the theory does not foresee weak convergence in that case, if it were  to occur, our results indicate that the rate would then be worse than in the discrete case.

\section{Summary and outlook}
We provided tight bounds on the pointwise approximation error as well as with respect to the Wasserstein-1 distance when approximating arbitrary measures by trigonometric polynomials.

The approximation by the convolution with the Fejér kernel is both simple and up to a logarithmic factor best possible in the worst case. In contrast and beyond the scope of this paper, a computation similar to \cref{thm:W1p} and \cref{Rem_Jackson} shows that stronger localised trigonometric kernels seem necessary when considering the Wasserstein-2 distance.

Future work might also consider the truncation of the singular value decomposition in \cref{sec:interp} if the support of the measure is only approximated by the zero set of an unknown trigonometric polynomial or the available trigonometric moments are disturbed by noise.

\setlength{\emergencystretch}{1em}
\setcounter{biburlnumpenalty}{5000}
\printbibliography{}

\end{document}